\newcommand*{\rom}[1]{\expandafter\@slowromancap\romannumeral #1@}
\newcommand{\doublewidetilde}[1]{{%
  \mathpalette\double@widetilde{#1}%
}}
\newcommand{\double@widetilde}[2]{%
  \sbox\z@{$\m@th#1\widetilde{#2}$}%
  \ht\z@=.9\ht\z@
  \widetilde{\box\z@}%
}
\newcommand\tsup[2][2]{%
 \def\useanchorwidth{T}%
  \ifnum#1>1%
    \stackon[-.5pt]{\tsup[\numexpr#1-1\relax]{#2}}{\scriptscriptstyle\sim}%
  \else%
    \stackon[.5pt]{#2}{\scriptscriptstyle\sim}%
  \fi%
}
\newtheorem{theorem}{Theorem}[section]
\newtheorem{corollary}[theorem]{Corollary}
\newtheorem{lemma}[theorem]{Lemma}
\newtheorem{proposition}[theorem]{Proposition}
\theoremstyle{definition}
\newtheorem{remark}[theorem]{Remark}
\DeclareMathOperator{\Perf}{\mathsf{Perf}}
\newcommand{\kEnd}{\mbox{\emph{End}}}
\newcommand{\kHom}{\mbox{\emph{Hom}}}
\newcommand{\can}{\mathsf{can}}
\newcommand{\tr}{\mathsf{tr}}
\DeclareMathOperator{\res}{\mathsf{res}}
\DeclareMathOperator{\ev}{\mathsf{ev}}
\DeclareMathOperator{\Sol}{\mathsf{Sol}}
\DeclareMathOperator{\Coh}{\mathsf{Coh}}
\DeclareMathOperator{\Pic}{\mathsf{Pic}}
\DeclareMathOperator{\Hom}{\mathsf{Hom}}
\DeclareMathOperator{\Ext}{\mathsf{Ext}}
\DeclareMathOperator{\Lin}{\mathsf{Lin}}
\DeclareMathOperator{\GL}{\mathsf{GL}}
\DeclareMathOperator{\Aut}{\mathsf{Aut}}
\DeclareMathOperator{\End}{\mathsf{End}}
\DeclareMathOperator{\Mat}{\mathsf{Mat}}
\DeclareMathOperator{\lieg}{\mathfrak{g}}
\DeclareMathOperator{\lieA}{\mathfrak{A}}
\newcommand{\bul}{\scriptstyle{\bullet}}
\newcommand{\CC}{\mathbb{C}}
\newcommand{\NN}{\mathbb{N}}
\newcommand{\ZZ}{\mathbb{Z}}
\newcommand{\EE}{\mathbb{E}}
\newcommand{\XX}{\mathbb{X}}
\newcommand{\PP}{\mathbb{P}}
\newcommand{\kMat}{\mbox{\emph{Mat}}}
\newcommand{\kA}{\mathcal{A}}
\newcommand{\kB}{\mathcal{B}}
\newcommand{\kE}{\mathcal{E}}
\newcommand{\kF}{\mathcal{F}}
\newcommand{\kG}{\mathcal{G}}
\newcommand{\kH}{\mathcal{H}}
\newcommand{\kO}{\mathcal{O}}
\newcommand{\kL}{\mathcal{L}}
\newcommand{\kK}{\mathcal{K}}
\newcommand{\kS}{\mathcal{S}}
\newcommand{\lar}{\longrightarrow}
\begin{document}

\title[On elliptic solutions of AYBE]{On elliptic solutions of the associative Yang--Baxter equation}

\author{Igor Burban}
\address{
Universit\"at Paderborn\\
Institut f\"ur Mathematik \\
Warburger Stra\ss{}e 100 \\
33098 Paderborn \\
Germany
}
\email{burban@math.uni-paderborn.de}

\author{Andrea Peruzzi}
\address{
Universit\"at Paderborn\\
Institut f\"ur Mathematik \\
Warburger Stra\ss{}e 100 \\
33098 Paderborn \\
Germany
}
\email{aperuzzi@math.uni-paderborn.de}

\begin{abstract}
We give a direct proof of the fact  that elliptic solutions of the associative Yang--Baxter equation arise from an appropriate spherical order on an elliptic curve.
\end{abstract}

\maketitle

\section{Introduction}
Let $\lieA = \Mat_{n}(\CC)$ be the algebra of square matrices of size $n \in \NN$ and 
$\bigl(\CC^3, 0) \stackrel{r}\lar \lieA \otimes \lieA$ be the germ of a meromorphic function. The following version of the associative Yang--Baxter equation (AYBE) with spectral parameters was introduced by Polishchuk in \cite{Polishchuk1}:
\begin{multline}\label{E:AYBE}
r(u; x_1, x_2)^{12} r(u+v; x_2, x_3)^{23}  =
r(u+v;x_1, x_3)^{13} r(-v; x_1, x_2)^{12} + \\
r(v; x_2, x_3)^{23} r(u; x_1, x_3)^{13}.
\end{multline}
The upper indices in this equation   indicate the corresponding  embeddings of
$\lieA \otimes \lieA$ into $\lieA \otimes \lieA \otimes \lieA$. For example,
the germ $r^{13}$ is defined as
$$
r^{13}: \mathbb{C}^3 \stackrel{r}\lar \lieA \otimes \lieA
\stackrel{\imath_{13}}\lar \lieA \otimes \lieA \otimes \lieA,
$$
where $\imath_{13}(x\otimes y) = x \otimes 1 \otimes y$. Two other
germs $r^{12}$ and $r^{23}$  are defined in a similar way.
We are interested in those solutions of AYBE,  which are  non-degenerate,  skew-symmetric  (meaning that  
$r(v; x_1, x_2) = -r^{21}(-v; x_2, x_1)$) and which admit a Laurent expansion of the form
\begin{equation}\label{E:AnsatzAYBE}
r(v; x_1, x_2) =
\frac{\mathbbm{1} \otimes \mathbbm{1}}{v} + r_{0}(x_1, x_2) + v r_1(x_1, x_2) +
v^2 r_2(x_1, x_2) +\dots
\end{equation}
All elliptic and trigonometric solutions of AYBE satisfying (\ref{E:AnsatzAYBE}) were classified in \cite{Polishchuk1, Polishchuk2}. Recall the description  of elliptic solutions of AYBE. 

\smallskip
\noindent
Let $\varepsilon = \exp\bigl(\frac{2 \pi i d}{n}\bigr)$, where $0 < d < n$ is such that $\gcd(d, n) = 1$. We put
\begin{equation}\label{E:matricesXY}
X = \left(
\begin{array}{cccc}
1 & 0 & \dots & 0 \\
0 & \varepsilon & \dots & 0 \\
\vdots & \vdots & \ddots & \vdots \\
0 & 0 & \dots & \varepsilon^{n-1}
\end{array}
\right)
\quad
\mbox{and} \quad
Y = \left(
\begin{array}{cccc}
0 & 1 & \dots & 0 \\
\vdots & \vdots & \ddots & \vdots \\
0 & 0 & \dots & 1 \\
1 & 0 & \dots & 0
\end{array}
\right).
\end{equation}
For any $(k, l) \in I := \bigl\{1, \dots, n\bigr\} \times \bigl\{1, \dots, n\bigr\}$ denote
$
Z_{(k, l)} = Y^k X^{-l}$ and $
Z_{(k, l)}^\vee  = \frac{1}{n} X^{l} Y^{-k}.
$
Then the following expression
\begin{equation}\label{E:SolutionElliptic}
r_{((n, d), \tau)}(v; x_1, x_2)  = \sum\limits_{(k, l) \in I}
\exp\Bigl(\frac{2 \pi i d}{n} k x\Bigr)
\sigma\Bigl(v + \frac{d}{n}\bigl(k \tau + l\bigr),  x\Bigr)
Z_{(k, l)}^\vee \otimes Z_{(k, l)}
\end{equation}
is a solution of AYBE satisfying (\ref{E:AnsatzAYBE}), where  $x = x_2 - x_1$ and
\begin{equation}\label{E:KroneckerEllFunct}
\sigma(a,  z) =
 2 \pi i \sum\limits_{n \in \ZZ} \frac{\exp(- 2 \pi i n z)}{1 - \exp\bigl(-2 \pi i (a - 2 \pi i n \tau)\bigr)}
\end{equation} is the Kronecker  elliptic function \cite{Weil} for $\tau \in \CC$  such that $\mathsf{Im}(\tau) > 0$. See also \cite[Section III]{LOZ} for a direct proof of this fact.

 In his recent work \cite{Polishchuk3} Polishchuk  showed that non-degenerate skew-symmetric solutions of AYBE  satisfying (\ref{E:AnsatzAYBE}) can be obtained from appropriate triple Massey products in the perfect derived category of coherent sheaves $\Perf(\EE)$ on a non-commutative projective curve $\EE = (E, \kA)$, where $E$ is an irreducible projective curve over $\CC$ of arithmetic genus one    and $\kA$ is a symmetric spherical order on $E$. 
 A simplest example of such an order is given by $\kA = \kEnd_E(\kF)$, where $\kF$ is a simple vector bundle on $E$. Let $E = E_\tau := \CC/\langle 1, \tau\rangle$ be the elliptic curve determined by $\tau \in \CC$ and $\kF$ be a simple vector bundle of rank $n$ and degree $d$ on $E$. It follows from results of Atiyah \cite{Atiyah} that such $\kF$ exists and the sheaf of algebras $\kA = \kA_{(n, d)} := \kEnd_E(\kF)$ does not depend on the choice of $\kF$. We show that the solution of AYBE arising from  the non-commutative projective curve $\bigl(E_\tau, \kA_{(n, d)}\bigr)$ is given by the formula (\ref{E:SolutionElliptic}). In  \cite[Section 2]{Polishchuk1}, the corresponding computations were performed using the homological mirror symmetry and explicit formulae for triple Massey products in the Fukaya category of a torus. The expression for the resulted solution of AYBE (see  \cite[formula (2.3)]{Polishchuk1})  was different   from (\ref{E:SolutionElliptic}). Our computations are straightforward  and  based by  techniques developed in the articles \cite{BK4, BH}.

\medskip
\noindent
\emph{Acknowledgement}. This  work  was supported  by the DFG project Bu--1866/5--1 as well as by CRC/TRR 191 project ``Symplectic
Structures in Geometry, Algebra and Dynamics'' of German Research Council (DFG).  We are grateful to Raschid Abedin for discussions of results of this paper.

\section{Symmetric spherical orders on curves of genus one and AYBE}
In this section we make a brief review of Polishchuk's construction \cite{Polishchuk3}.
Let $E$ be an irreducible projective curve over $\CC$ of arithmetic genus one, $\breve{E}$ its smooth part, $\kO$ its structure sheaf, $\kK$ the sheaf of rational functions on $E$  and $\Omega$ the sheaf   of regular differential one-forms on $E$.  
There exists a regular differential one-form  $\omega \in \Gamma(E, \Omega)$ such that 
$\Gamma(E, \Omega) = \CC \omega$. Such $\omega$ also defines an isomorphism $\kO \cong \Omega$. If $E$ is singular then it  is rational. In this case, let 
$\PP^1 \stackrel{\nu}\to E$ be the normalization morphism  and $\widetilde\kO = 
\nu_\ast\bigl(\kO_{\PP^1}\bigr)$. 

Let $\kA$ be a sheaf of orders on $E$. By definition, $\kA$ is a torsion free coherent sheaf of $\kO$-algebras on $E$ such that  $\kA \otimes_{\kO} \kK \cong \kMat_n(\kK)$ for some $n \in \NN$.  For any order $\kA$ we have the canonical  trace morphism $\kA \stackrel{t}\to \widetilde\kO$, which coincides with the restriction of the trace morphism $\kA \hookrightarrow \kA \otimes_{\kO} \kK\cong \kMat_n(\kK) \stackrel{t}\to \kK$ (if $E$ is smooth then $\widetilde\kO = \kO$). Following \cite{Polishchuk3}, the order  $\kA$ is called  \emph{symmetric spherical} if the following conditions are fulfilled:
\begin{itemize}
\item The image of the trace morphism $t$ is  $\kO$ and the induced morphism of coherent sheaves $\kA \stackrel{t^\sharp}\lar \kA^\vee := \kHom_{E}(\kA, \kO)$ is an isomorphism.
\item We have: $\Gamma(E, \kA) \cong \CC$. 
\end{itemize}

Consider the non-commutative projective curve  $\EE = (E, \kA)$. Let   $\Coh(\EE)$ be the category of coherent sheaves on $\EE$ (these are sheaves of $\kA$-modules which are coherent as $\kO$-modules) and $\Perf(\EE)$ be the corresponding perfect derived category. Recall that $\Omega_{\EE} := \kHom_E(\kA, \Omega)$ is a dualising bimodule of $\EE$. If $\kA$ is symmetric then 
$\Omega_{\EE} \cong \kA$ as $\kA$-bimodules and $\Perf(\EE)$ is a triangulated 1-Calabi--Yau category. The last assertion  means that for any pair of objects $\kG^{\bul}, \kH^{\bul}$ in $\Perf(\EE)$ there is an isomorphism of vector spaces 
\begin{equation}\label{E:SerreDuality}
\Hom_{\EE}\bigl(\kG^{\bul}, \kH^{\bul}\bigr) \cong \Hom_{\EE}\bigl(\kH^{\bul}, \kG^{\bul}[1]\bigr)^\ast 
\end{equation}
which is functorial in both arguments.

Let $P = \underline{\Pic}^0(E)$ be the Jacobian of $E$ and $\kL \in \Pic(P \times E)$ be  a universal line bundle. For any $v \in P$, let $\kL^{v}:= \kL\big|_{\{v\} \times E} \in \Pic^{0}(E)$  and $\kA^{v} := \kA \otimes_\kO \kL^{v} \in \Coh(\EE)$.

\begin{lemma}
The coherent sheaf $\kA$ is semi-stable of slope zero. Moreover,
\begin{equation}\label{E:VanishingLocus}
\Gamma(E, \kA^{v}) = 0 = H^1(E, \kA^{v})
\end{equation} for all  but finitely many points $v \in P$. 
\end{lemma}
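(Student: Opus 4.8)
The plan is to establish the two assertions separately: first that the coherent $\kO$-module $\kA$ is semi-stable of slope zero, and then the generic vanishing (\ref{E:VanishingLocus}). The slope is immediate from the definitions: for a symmetric order the trace morphism induces an isomorphism $\kA \cong \kA^\vee = \kHom_E(\kA, \kO)$, and since $\kA$ is torsion free on the Gorenstein curve $E$ of arithmetic genus one (with invertible dualising sheaf $\Omega \cong \kO$) one has $\deg\kA^\vee = -\deg\kA$; this follows from Grothendieck--Serre duality, which gives $\chi(E, \kA^\vee) = -\chi(E, \kA)$ because $\kA$ is a maximal Cohen--Macaulay sheaf, together with the identity $\deg = \chi$ in arithmetic genus one. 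Hence $\deg\kA = 0$, i.e. $\mu(\kA) = 0$.

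The heart of the argument is semi-stability, and the idea is to exploit the multiplicative structure of $\kA$. Assume $\kA$ is not semi-stable and let $\kN \subset \kA$ be its maximal destabilising subsheaf, so that $\kN$ is a nonzero proper subsheaf, semi-stable of slope $\mu(\kN) = \mu_{\max}(\kA) > \mu(\kA) = 0$. I would first show that $\kN$ is automatically a two-sided ideal of $\kA$: for any local section $a$ of $\kA$ the right multiplication $\rho_a \colon \kA \to \kA$, $x \mapsto xa$, is $\kO$-linear, so its image $\rho_a(\kN)$ is at the same time a quotient of the semi-stable sheaf $\kN$ --- forcing $\mu_{\min}(\rho_a(\kN)) \geq \mu(\kN)$ --- and a subsheaf of $\kA$ --- forcing $\mu_{\max}(\rho_a(\kN)) \leq \mu_{\max}(\kA) = \mu(\kN)$; hence $\rho_a(\kN)$ is semi-stable of slope $\mu_{\max}(\kA)$ and therefore contained in the maximal destabilising subsheaf $\kN$. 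The same argument with left multiplication gives $\kA\kN \subseteq \kN \supseteq \kN\kA$. Since $\kN$ is nonzero and torsion free, $\kN \otimes_\kO \kK$ is a nonzero two-sided ideal of the simple ring $\kA \otimes_\kO \kK \cong \kMat_n(\kK)$, hence equal to it; thus $\kN$ has the same generic rank as $\kA$, so $\kA/\kN$ is a torsion sheaf and $\deg\kN = \deg\kA - \deg(\kA/\kN) = -\deg(\kA/\kN) \leq 0$, contradicting $\mu(\kN) > 0$. Therefore $\kA$ is semi-stable of slope zero.

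For (\ref{E:VanishingLocus}), I would use that each twist $\kA^{v} = \kA \otimes_\kO \kL^{v}$ is again semi-stable of slope zero (tensoring with a degree-zero line bundle preserves both semi-stability and slope) and satisfies $\chi(E, \kA^{v}) = \deg\kA^{v} = 0$; so $h^0(E, \kA^{v}) = h^1(E, \kA^{v})$, and it suffices to show $\Gamma(E, \kA^{v}) = 0$ for all but finitely many $v$. A nonzero global section of $\kA^{v}$ is the same as a nonzero $\kO$-linear map $(\kL^{v})^{-1} \to \kA$; since its image is a torsion-free quotient of the line bundle $(\kL^{v})^{-1}$, it must be isomorphic to $(\kL^{v})^{-1}$, so $(\kL^{v})^{-1}$ embeds into $\kA$ as a slope-zero sub-line bundle. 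Now pass to a Jordan--Hölder filtration of $\kA$ in the category of semi-stable sheaves of slope zero: its constituents are stable of slope zero, and by the classification of such sheaves on irreducible curves of genus one (\cite{Atiyah} for smooth $E$, and \cite{BK4, BH} in the nodal and cuspidal cases) each constituent is either a line bundle of degree zero --- a closed point of $P$ --- or, only in the singular case, of the form $\nu_\ast(\kM)$ with $\kM$ a line bundle of degree $-1$ on $\PP^1$. Because $\nu^\ast (\kL^{v})^{-1} \cong \kO_{\PP^1}$, adjunction gives $\Hom_E\bigl((\kL^{v})^{-1}, \nu_\ast\kM\bigr) \cong H^0(\PP^1, \kM) = 0$ for a constituent of the latter type, whereas any nonzero map from $(\kL^{v})^{-1}$ to a degree-zero line bundle is an isomorphism; hence the inclusion $(\kL^{v})^{-1} \hookrightarrow \kA$ forces $(\kL^{v})^{-1}$ to be isomorphic to one of the finitely many line-bundle constituents of $\kA$. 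This confines $\{v \in P : \Gamma(E, \kA^{v}) \neq 0\}$ --- and therefore also the locus where $H^1(E, \kA^{v}) \neq 0$ --- to a finite set. One may equivalently package this last step through the Fourier--Mukai transform of \cite{BK4}: $\FM(\kA)$ is a coherent torsion sheaf on $P$, whose finite support records precisely the $v$ for which $\Gamma(E, \kA^{v})$ fails to vanish.

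The routine ingredients are the existence and formal properties of Harder--Narasimhan and Jordan--Hölder filtrations and elementary bookkeeping with ranks and degrees. The genuinely new point is the reduction in the second paragraph, showing that a destabilising subsheaf of an order must be a two-sided ideal; this is where the symmetry of $\kA$ (through $\deg\kA = 0$) and the simplicity of $\kMat_n(\kK)$ enter decisively. I expect the step that needs most care to be the last one, namely having at hand the precise list of stable slope-zero sheaves and the relevant $\Hom$-computations on nodal and cuspidal curves of genus one, for which one relies on \cite{BK4, BH}.
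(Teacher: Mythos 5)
Your computation of $\deg\kA=0$ from $\kA\cong\kA^\vee$ is fine, and your deduction of the generic vanishing (\ref{E:VanishingLocus}) from semi-stability via a Jordan--H\"older filtration is correct and in fact more detailed than the paper, which merely asserts that implication. The gap is in the central step, the proof of semi-stability itself, and it is signalled by the fact that your argument never uses the sphericity condition $\Gamma(E,\kA)\cong\CC$ --- only the symmetry of the order. But a symmetric order need not be semi-stable: on an elliptic curve $E$ the order $\kA=\kEnd_E\bigl(\kO\oplus\kO(p)\bigr)$ has surjective trace and perfect trace pairing (so it satisfies the first bullet of the definition and has degree zero), yet it contains the destabilising sub-line bundle $\kHom_E\bigl(\kO,\kO(p)\bigr)\cong\kO(p)$ of slope $1$. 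So no argument using only symmetry can succeed. Concretely, your lemma ``the maximal destabilising subsheaf $\kN$ is a two-sided ideal'' is false in this example: composing a local section of the off-diagonal block $\kHom_E\bigl(\kO,\kO(p)\bigr)$ with a local section of $\kHom_E\bigl(\kO(p),\kO\bigr)\cong\kO(-p)$ lands in a diagonal block $\cong\kO$, and these products generate that block locally, so $\kN\cdot\kA\not\subseteq\kN$. The flaw in your proof of the lemma is that $\rho_a$ is only a morphism of sheaves over the open set on which the local section $a$ is defined; $\rho_a(\kN)$ is then a coherent sheaf on that open set, not on the projective curve $E$, so it is neither ``a quotient of the semi-stable sheaf $\kN$'' nor ``a subsheaf of $\kA$'' in any sense to which $\mu_{\min}$ and $\mu_{\max}$ apply. (Clearing poles to get a global map $\kA\to\kA(D)$ only yields $\mu_{\max}\le\mu_{\max}(\kA)+\deg D$, which is useless; and replacing $\rho_a$ by the global multiplication map $\kN\otimes_\kO\kA\to\kA$ requires $\mu_{\min}(\kA)\ge 0$, i.e.\ the very semi-stability you are trying to prove.)

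The paper's route is quite different and shows where sphericity must enter. Since the trace is split by $\frac1n\mathbbm{1}$, one has $\kA\cong\kB\oplus\kO$ with $\kB=\ker(t)$; the condition $\Gamma(E,\kA)=\CC\cdot\mathbbm{1}$ forces $H^0(E,\kB)=0$ (the trace of $\mathbbm{1}$ is $n\ne0$), and then $\chi(E,\kB)=\chi(E,\kA)-\chi(E,\kO)=0$ gives $H^1(E,\kB)=0$ as well. On a curve of arithmetic genus one, a coherent sheaf with vanishing $H^0$ and $H^1$ is automatically semi-stable of slope zero (any subsheaf of positive slope has positive Euler characteristic, hence a nonzero global section), and a direct sum of semi-stable sheaves of slope zero is semi-stable. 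You should rebuild your second paragraph around this cohomological characterisation rather than the ideal-theoretic one; your first and third paragraphs can then be kept essentially as written.
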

\begin{proof} Let $\kB$ be the kernel of the trace morphism $\kA \stackrel{t}\to \kO$. It follows from the long exact cohomology sequence of 
$
0 \to \kB \to\kA \stackrel{t}\to \kO \to 0
$
 that $H^0(E, \kB) = 0 = H^1(E, \kB)$. Hence, $\kB$ is a semi-stable coherent sheaf on 
 $E$ of slope zero and $\kA \cong \kB \oplus \kO$. It follows that $\kA$ is semi-stable, too. The latter fact also implies the vanishing $\Gamma(E, \kA^{v}) = 0 = H^1(E, \kA^{v})$ for all but finitely many  $v \in P$. 
\end{proof}

\begin{corollary}
There exists a proper closed subset $D \subset P \times P$ such that 
\begin{equation}\label{E:Vanish}
\Hom_{\EE}\bigl(\kA^{v_1}, \kA^{v_2}\bigr) = 0 = \Ext^1_{\EE}\bigl(\kA^{v_1}, \kA^{v_2}\bigr)
\end{equation}
for all $v_1, v_2 \in (P \times P) \setminus D$. 
\end{corollary}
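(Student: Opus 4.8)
The plan is to identify, for all $v_1, v_2 \in P$, the spaces $\Hom_\EE\bigl(\kA^{v_1}, \kA^{v_2}\bigr)$ and $\Ext^1_\EE\bigl(\kA^{v_1}, \kA^{v_2}\bigr)$ with $\Gamma\bigl(E, \kA^{v_2 - v_1}\bigr)$ and $H^1\bigl(E, \kA^{v_2 - v_1}\bigr)$, respectively, and then to invoke the Lemma. First I would observe that $\kA^{v_1} = \kA \otimes_\kO \kL^{v_1}$ is, locally on $E$, isomorphic to $\kA$ as a sheaf of left $\kA$-modules, since $\kL^{v_1}$ is locally trivial over $\kO$ and $\kO$ is central in $\kA$; hence $\kA^{v_1}$ is a locally free left $\kA$-module of rank one. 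Therefore the local $\mathcal{E}xt$-sheaves $\mathcal{E}xt^{i}_{\kA}\bigl(\kA^{v_1}, -\bigr)$ vanish for $i \geq 1$, the local-to-global spectral sequence degenerates, and one gets $\Ext^{i}_\EE\bigl(\kA^{v_1}, \kA^{v_2}\bigr) \cong H^{i}\bigl(E, \kHom_{\kA}(\kA^{v_1}, \kA^{v_2})\bigr)$ for all $i \geq 0$.

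Next I would compute the Hom-sheaf by twisting: since $\kL^{v_1}$ and $\kL^{v_2}$ are invertible $\kO$-modules and $\kO$ is central in $\kA$,
\[
\kHom_{\kA}\bigl(\kA \otimes_\kO \kL^{v_1}, \kA \otimes_\kO \kL^{v_2}\bigr) \;\cong\; \kHom_{\kA}(\kA, \kA) \otimes_\kO \kL^{v_2} \otimes_\kO \bigl(\kL^{v_1}\bigr)^\vee \;\cong\; \kA \otimes_\kO \kL^{v_2 - v_1} \;=\; \kA^{v_2 - v_1},
\]
where $v_2 - v_1$ denotes the difference taken in the group $P = \underline{\Pic}^0(E)$ and we use that two degree-zero line bundles on $E$ with the same class in $P$ are isomorphic. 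Combining this with the previous step yields $\Hom_\EE\bigl(\kA^{v_1}, \kA^{v_2}\bigr) \cong \Gamma\bigl(E, \kA^{v_2-v_1}\bigr)$ and $\Ext^1_\EE\bigl(\kA^{v_1}, \kA^{v_2}\bigr) \cong H^1\bigl(E, \kA^{v_2-v_1}\bigr)$.

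To conclude, let $S \subset P$ be the set of $w \in P$ with $\Gamma(E, \kA^{w}) \neq 0$ or $H^1(E, \kA^{w}) \neq 0$; by (\ref{E:VanishingLocus}) this set is finite. The difference map $\delta \colon P \times P \to P$, $(v_1, v_2) \mapsto v_2 - v_1$, is a morphism of algebraic varieties (in fact of algebraic groups, uniformly in the smooth, nodal and cuspidal cases), so $D := \delta^{-1}(S)$ is a closed subset of $P \times P$; since $S$ is finite and $\dim(P \times P) = 2$, it is a finite union of translates of the diagonal, hence a proper closed subset. For $(v_1, v_2) \in (P \times P) \setminus D$ one has $v_2 - v_1 \notin S$, so both $\Gamma\bigl(E, \kA^{v_2-v_1}\bigr)$ and $H^1\bigl(E, \kA^{v_2-v_1}\bigr)$ vanish, which is exactly (\ref{E:Vanish}). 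I do not expect a serious obstacle: the only delicate points are the compatibility of the twisting isomorphism with the chosen universal bundle $\kL$ — which is needed only fibrewise over $P \times P$, where it reduces to the defining property of $\underline{\Pic}^0(E)$ — and the fact that $\delta$ is a morphism, which holds because $P$ is an algebraic group in all three cases.
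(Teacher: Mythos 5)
Your proposal is correct and follows essentially the same route as the paper: identify $\Ext^i_{\EE}\bigl(\kA^{v_1}, \kA^{v_2}\bigr)$ with $H^i\bigl(E, \kA^{v_2-v_1}\bigr)$ via local projectivity of $\kA^{v_1}$ over $\kA$ and the twisting isomorphism for the Hom-sheaf, then apply the vanishing from the Lemma. You merely spell out the justification of these isomorphisms and the construction of $D$ as a finite union of translates of the diagonal, details the paper leaves implicit.
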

\begin{proof} This statement follows from the isomorphisms
\begin{equation}
\Ext^i_{\EE}\bigl(\kA^{v_1}, \kA^{v_2}\bigr) \cong H^i\bigl(E, \kEnd_{\EE}(\kA^{v_1}, \kA^{v_2})\bigr) \cong H^i\bigl(E, \kA^{v_2 - v_1}\bigr), \quad i = 0, 1
\end{equation}
and the vanishing (\ref{E:VanishingLocus}).
\end{proof}

\noindent
Recall that for any $x, y \in \breve{E}$
we have the following standard  short exact sequences
\begin{equation}\label{E:CanonicalSequences}
0 \lar \Omega \lar \Omega(x) \xrightarrow{\underline{\res}_x} \CC_x \lar 0 \quad \mbox{\rm and} \quad 0 \lar \kO(-y) \lar \kO \xrightarrow{\underline{\ev}_y} \CC_y \lar 0,
\end{equation}
where $\underline{\res}_x$ and $\underline{\ev}_y$ are the residue and evaluation morphisms, respectively. Using the isomorphism 
$\kO \stackrel{\omega}\to \Omega$,  we can rewrite the first short exact sequence as  
\begin{equation}\label{E:ResidueTrivialized}
0 \lar \kO \lar \kO(x) \xrightarrow{\underline{\res}^\omega_x} \CC_x \lar 0.
\end{equation}
For any $\kH \in \Coh(\EE)$ denote $\kH\big|_{x} := \kH \otimes_\kO \CC_x \in \Coh(\EE)$.  Tensoring (\ref{E:ResidueTrivialized}) by $\kA^v$ (where 
 $v \in P$ is an arbitrary point), we get the following short exact sequence in $\Coh(\EE)$: 
$$
0 \lar \kA^{v} \lar \kA^{v}(x) \lar \kA^{v}\big|_x \lar 0.  
$$
Next, for any $(u, v) \in P \times P$ we have the induced long  exact sequence of vector spaces
$$
0 \lar \Hom_{\EE}(\kA^{u}, \kA^{v}\bigr) \lar 
\Hom_{\EE}(\kA^{u}, \kA^{v}(x)\bigr) \lar \Hom_{\EE}(\kA^{u}, \kA^{v}\big|_x\bigr) \lar 
\Ext^1_{\EE}(\kA^{u}, \kA^{v}\bigr).
$$
It follows from (\ref{E:VanishingLocus})  that the  linear map
\begin{equation}\label{E:ResidueSheafA}
\Hom_{\EE}(\kA^{u}, \kA^{v}(x)\bigr) \xrightarrow{\underline{\res}^{\kA}(u,v; x)} \Hom_{\EE}(\kA^{v}\big|_x, \kA^{v}\big|_x\bigr)
\end{equation}
is an isomorphism if $(u, v) \in (P \times P)\setminus D$.

\smallskip
\noindent
Similarly, for any $v \in P$ and $x \ne y \in \breve{E}$ we have the following  short exact sequence 
$$
0 \lar \kA^v(x-y) \lar \kA^v(x) \lar \kA^v(x)\big|_y \lar 0
$$
in $\Coh(\EE)$, which is obtained by tensoring the evaluation sequence (\ref{E:CanonicalSequences}) by $\kA^v(x)$. After applying to it the functor 
$\Hom_{\EE}(\kA^u, \,-\,)$ and using  a canonical isomorphism $\kA^v\big|_y \cong \kA^v(x)\big|_y$, we obtain a linear map
\begin{equation}\label{E:EvaluationSheafA}
\Hom_{\EE}(\kA^{u}, \kA^{v}(x)\bigr) \xrightarrow{\underline{\ev}^{\kA}(u,v; x, y)} \Hom_{\EE}(\kA^{u}\big|_y, \kA^{v}\big|_y\bigr).
\end{equation}
Let $\Hom_{\EE}\bigl(\kA^u\big|_x,  \kA^v\big|_x\bigr) \xrightarrow{\alpha(u, v; x,y)}
\Hom_{\EE}\bigl(\kA^u\big|_y,  \kA^v\big|_y\bigr)$ be (the unique) linear map making the following diagram of vector spaces 
\begin{equation}\label{E:ResidueEvaluation}
\begin{array}{c}
\xymatrix@C-C@R+3mm
{
& \Hom_{\EE}\bigl(\kA^u, \kA^v(x)\bigr)
\ar[ld]_-{\underline{\res}^{\kA}(u,v; x)\phantom{x}}
\ar[rd]^-{\phantom{x}\underline{\ev}^{\kA}(u,v; x, y)} &
\\
\Hom_{\EE}\bigl(\kA^u\big|_x,  \kA^v\big|_x\bigr)
\ar[rr]^-{\alpha(u, v; x,y)} & &
\Hom_{\EE}\bigl(\kA^u\big|_y,  \kA^v\big|_y\bigr)
}
\end{array}
\end{equation}
commutative. Let $\gamma(u, v; x, y)\in \Hom_{\EE}\bigl(\kA^v\big|_x,  \kA^u\big|_x\bigr) \otimes \Hom_{\EE}\bigl(\kA^u\big|_y,  \kA^v\big|_y\bigr)$ be the image of $\alpha(u, v; x,y)$ under the composition of the following canonical isomorphisms of vector spaces:
\begin{multline*}
\Lin\Bigl(\Hom_{\EE}\bigl(\kA^u\big|_x,  \kA^v\big|_x\bigr), 
\Hom_{\EE}\bigl(\kA^u\big|_y,  \kA^v\big|_y\bigr)\Bigr) \cong \Hom_{\EE}\bigl(\kA^u\big|_x,  \kA^v\big|_x\bigr)^\ast \otimes \Hom_{\EE}\bigl(\kA^u\big|_y,  \kA^v\big|_y\bigr) \\
\cong \Hom_{\EE}\bigl(\kA^v\big|_x,  \kA^u\big|_x\bigr) \otimes \Hom_{\EE}\bigl(\kA^u\big|_y,  \kA^v\big|_y\bigr),
\end{multline*}
where the last isomorphism is induced by the trace morphism $t$.

\smallskip
\noindent
 Let 
$P \times P \stackrel{\eta}\lar  P$ be the group operation on $P$ and  $o \in P$ be the corresponding neutral element (i.e. $\kO  \cong  \kL^o$).  Consider the canonical projections 
 $P \times P \times E \stackrel{\pi_i}\lar P \times E, \, (x_1, x_2; x) \mapsto (x_i, x)$ for $i = 1, 2$ and   $P \times P \times E \stackrel{\pi_\circ}\lar P \times E, \, (x_1, x_2; x) \mapsto (x_1, x_2)$. Then there exists $\kS \in \Pic(P \times P)$ such that 
 \begin{equation}\label{E:UnivProp}
 (\eta \times \mathbbm{1})^\ast \kL \cong \pi_1^\ast \kL \otimes 
 \pi_2^\ast \kL \otimes \pi_\circ^\ast \kS. 
 \end{equation} 
In particular, $\kL^{v_1} \otimes \kL^{v_2} \cong \kL^{v_1+v_2}$, where $v_1 + v_2 = \eta(v_1, v_2)$.

\smallskip
\noindent
For any type of $E$ (elliptic, nodal or cuspidal) there exists  a complex analytic covering map
$(\CC, +) \stackrel{\chi}\lar (P, \eta)$, which is also a group homomorphism. In this way we get a local coordinate on $P$ in a neighbourhood of $o$. Next, we put:
$\overline\kL := (\chi \times \mathbbm{1})^\ast \kL$. Since any line bundle on $\CC \times \CC$ is trivial, we get from  (\ref{E:UnivProp}) an induced isomorphism 
\begin{equation}\label{E:UnivProp2}
 (\bar\eta \times \mathbbm{1})^\ast \overline\kL \cong \bar\pi_1^\ast \overline\kL \otimes 
 \bar\pi_2^\ast \overline\kL,
 \end{equation} 
 where $\bar\eta$ (respectively, $\bar\pi_i$) is the composition of $\eta$ (respectively, $\pi_i$) with $\chi \times \chi$. It follows that we have isomorphisms
\begin{equation}\label{E:SectionsAlphaBeta}
 \kO_E \stackrel{\alpha}\lar \overline\kL\Big|_{0 \times E} \quad \mbox{and} \quad
 \kO_{\CC \times \CC \times E} \stackrel{\beta}\lar \bar\eta^\ast \overline\kL^\vee \otimes \bar\pi_1^\ast \overline\kL \otimes 
 \bar\pi_2^\ast \overline\kL.
\end{equation}

\smallskip
\noindent
Let $U \subset \breve{E}$ be an open subset for which there exists an isomorphism  of $\Gamma(U, \kO_E)$-algebras
\begin{equation}\label{E:TrivialisationXi}
\Gamma(U, \kA) \stackrel{\xi}\lar \lieA \otimes_{\CC} \Gamma(U, \kO_E)
\end{equation}
as well as a trivialization 
\begin{equation}\label{E:TrivialisationZeta}
\Gamma(\CC \times U, \overline\kL) \stackrel{\bar\zeta}\lar \Gamma(\CC \times U, \kO_{\CC \times E}),
\end{equation} 
which identify the sections $\alpha$ and $\beta$ from (\ref{E:SectionsAlphaBeta}) with the identity section. Since $\eta$ is a complex analytic covering map, we get from $\bar\zeta$ a local trivialization $\zeta$ of the universal family $\kL$.
Then such trivializations $\xi$ and $\zeta$ allow to identify $\gamma(u, v; x, y)$  with a tensor $\rho(u, v; x, y) \in \lieA \otimes \lieA$. Note that by the construction the tensor $\rho(u, v; x, y)$ depends only  the difference $w := u-v \in P$ with respect to the group law on the Jacobian $P$. 

\begin{theorem}[Polishchuk \cite{Polishchuk3}]
The constructed tensor $\varrho(w; x, y) = \rho(u, v; x, y)$ is a non-degenerate skew-symmetric solution of the associatiove Yang--Baxter equation (\ref{E:AYBE}). 
\end{theorem}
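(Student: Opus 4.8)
The plan is to derive the three required properties --- non-degeneracy, skew-symmetry, and the AYBE itself --- from the fact that $\Perf(\EE)$ is a triangulated $1$-Calabi--Yau category (isomorphism (\ref{E:SerreDuality}), valid because $\kA$ is symmetric), combined with the very explicit description of morphisms between the twisted sheaves $\kA^{v}(\ast)$ and their skyscraper quotients $\kA^{v}\big|_x$ coming from (\ref{E:CanonicalSequences}). The first step is to recognise $\gamma(u,v;x,y)$ as a triple Massey product. The residue sequence $0 \to \kA^{v} \to \kA^{v}(x) \to \kA^{v}\big|_x \to 0$ and the evaluation sequence $0 \to \kA^{v}(x-y) \to \kA^{v}(x) \to \kA^{v}\big|_y \to 0$ present $\kA^{v}\big|_x$ and $\kA^{v}\big|_y$ as cones of canonical morphisms in $\Perf(\EE)$; given $f \in \Hom_{\EE}\bigl(\kA^{u}\big|_x, \kA^{v}\big|_x\bigr)$, the vanishing (\ref{E:Vanish}) forces $f$ to lift uniquely to $\Hom_{\EE}\bigl(\kA^{u}, \kA^{v}(x)\bigr)$, and $\underline{\ev}^{\kA}(u,v;x,y)$ records the \emph{value at $y$} of this lift. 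Thus $\alpha(u,v;x,y)$, and hence $\gamma(u,v;x,y)$, is the triple product $\bigl\langle \underline{\ev}_y, \ARG, \underline{\res}_x\bigr\rangle$ computed in $\Perf(\EE)$; transporting it through the trivialisations (\ref{E:TrivialisationXi}) and (\ref{E:TrivialisationZeta}) gives the tensor $\rho(u,v;x,y) \in \lieA \otimes \lieA$. This reformulation makes the remaining properties tractable.

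Non-degeneracy is the easiest point: $\underline{\res}^{\kA}(u,v;x)$ is an isomorphism off $D$ by (\ref{E:VanishingLocus}), and applying the same vanishing to $\kA^{v-u}(x-y)$ shows $\underline{\ev}^{\kA}(u,v;x,y)$ is injective, hence an isomorphism between two copies of $\lieA$; therefore $\alpha(u,v;x,y)$, and with it $\rho(u,v;x,y)$, is non-degenerate, and the leading term of its Laurent expansion in the spectral parameter is the one prescribed by (\ref{E:AnsatzAYBE}) (it is the residue of the triple product as $y \to x$). For skew-symmetry I would use that the residue and evaluation sequences of (\ref{E:CanonicalSequences}) are interchanged by the duality functor $\kHom_E(\ARG,\Omega)$ --- the residue pairing on $\Omega(x)\big/\Omega$ against $\kO\big/\kO(-x)$ is perfect --- together with the $1$-Calabi--Yau isomorphism (\ref{E:SerreDuality}) and $\Omega_{\EE} \cong \kA$. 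Applying Serre duality to the triple product identifies $\alpha(u,v;x,y)$, up to transposition and the Koszul sign produced by the shift $[1]$, with $\alpha(v,u;y,x)$; pushing this identity through the trace identification used to define $\gamma$ yields precisely $\varrho(w;x_1,x_2) = -\varrho^{21}(-w; x_2, x_1)$.

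The substance of the theorem is the AYBE. Here I would introduce three pairwise distinct points $x_1, x_2, x_3 \in \breve{E}$ and, for each ordered pair, form the lifted morphism as above. Both sides of (\ref{E:AYBE}), read in $\lieA \otimes \lieA \otimes \lieA$, are assembled from such lifts by composing morphisms in $\Perf(\EE)$ and re-restricting to skyscrapers; after the trace identifications of (\ref{E:ResidueEvaluation}) they live in the same space, with the $i$-th tensor factor arising from $\Hom_{\EE}\bigl(\kA^{v_i}\big|_{x_i}, \kA^{u_i}\big|_{x_i}\bigr) \cong \lieA$. The equality of the two sides then rests on two facts: associativity of composition in $\Perf(\EE)$, and the compatibility of the canonical triangles at distinct points, i.e.\ the octahedral-type commutative diagrams relating $\kA^{v}$, $\kA^{v}(x_i)$, $\kA^{v}(x_i+x_j)$ and the associated skyscrapers. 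Concretely, when one commutes a lift past a restriction, the connecting homomorphisms of the residue/evaluation sequences contribute exactly three terms, which match the two summands on the right-hand side and the single product on the left-hand side of (\ref{E:AYBE}); for these particular objects this is the categorical shadow of the associativity constraint in $\Perf(\EE)$, and the computation stays finite because all relevant $\Ext$-groups are concentrated in degrees $0$ and $1$. The shifts $u \rightsquigarrow u+v$ in (\ref{E:AYBE}) reflect the tensor identity $\kL^{v_1}\otimes\kL^{v_2} \cong \kL^{v_1+v_2}$ from (\ref{E:UnivProp}), while the observation (made just before the statement) that $\rho$ depends only on $w = u-v$ is what turns the two categorical parameters into the single additive spectral parameter.

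The main obstacle is the bookkeeping concentrated in this last step. One must choose the trivialisations $\xi$ and $\bar\zeta$ so that the sections $\alpha$, $\beta$ of (\ref{E:SectionsAlphaBeta}) become the identity sections: this is exactly what makes the spectral dependence \emph{additive} rather than merely cocyclic, and only then do the categorical identities reproduce (\ref{E:AYBE}) with the correct placement of the indices $12$, $23$, $13$, the correct signs, and the correct shifts of the spectral parameter. Equivalently, the task is to verify that commuting lifts past restrictions produces no spurious terms beyond the three listed ones --- and it is precisely here that the symmetric spherical hypothesis on $\kA$ is genuinely used, through $\Omega_{\EE}\cong\kA$ and the Serre duality of the previous paragraph, to control the relevant obstruction classes. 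Once this is in place, non-degeneracy and skew-symmetry follow as indicated, and the normalisation (\ref{E:AnsatzAYBE}) is read off from the residue of the triple product along the diagonal $x=y$.
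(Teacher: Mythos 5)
Your overall architecture coincides with the paper's: identify $\alpha(u,v;x,y)$ (equivalently $\gamma(u,v;x,y)$) with a triple Massey product between the objects $\kA^{v_i}$ and skyscraper-type objects in $\Perf(\EE)$, get non-degeneracy from the vanishing (\ref{E:Vanish}) (which makes $\underline{\res}^{\kA}$ an isomorphism and $\underline{\ev}^{\kA}$ injective between spaces of equal dimension), and get skew-symmetry from the $1$-Calabi--Yau property (\ref{E:SerreDuality}). Those parts are fine.

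The gap is in the step you yourself identify as ``the substance of the theorem''. The paper derives the categorical AYBE (\ref{E:AYBEcat}) from the existence of an $A_\infty$-structure on $\Perf(\EE)$ that is \emph{cyclic} with respect to (\ref{E:SerreDuality}): the identity among compositions of two triple products $m_3$ is an instance of the higher $A_\infty$-axiom, after the terms involving $m_2$ and $m_4$ are killed by the vanishing (\ref{E:Vanish}) and degree reasons, and cyclicity is what converts $m_3$ into the tensor $m^{u,v}_{x,y}$ and simultaneously yields (\ref{E:SkewSymmcat}). You propose to replace this by ``associativity of composition in $\Perf(\EE)$'' plus ``octahedral-type commutative diagrams'', asserting that commuting a lift past a restriction contributes ``exactly three terms'' matching (\ref{E:AYBE}). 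This is not an argument: in a bare triangulated category the triple Massey products are only defined up to indeterminacy (here the indeterminacy vanishes thanks to (\ref{E:Vanish}), so the products are at least well defined), but the quadratic relations among them that constitute the AYBE are \emph{not} consequences of the triangulated structure --- they are exactly the content of the $A_\infty$-constraint, which is the whole point of Polishchuk's theorem (the title of \cite{Polishchuk1} is not an accident). The octahedral axiom controls single cones, not the interaction of two triple products over three distinct points, and your sentence claiming the three terms appear is the statement to be proved rather than a mechanism producing it. Relatedly, the role of the symmetric spherical hypothesis is misattributed: the symmetry of $\kA$ is what makes the $A_\infty$-structure cyclic (hence the tensor skew-symmetric), and sphericity ($\Gamma(E,\kA)\cong\CC$) is what pins down the first-order pole $\frac{\mathbbm{1}\otimes\mathbbm{1}}{v}$ of (\ref{E:AnsatzAYBE}) at $w=0$, which is a pole in the Jacobian variable $w=u-v$, not a residue ``as $y\to x$'' as your parenthetical suggests. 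To close the gap you must either invoke the cyclic $A_\infty$-enhancement and carry out the vanishing analysis that reduces the $A_\infty$-identity to (\ref{E:AYBEcat}), or verify the identity for the explicit residue/evaluation maps directly (e.g.\ by a residue-theorem computation in the spirit of \cite{BK4}); neither is done in the proposal.
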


\smallskip
\noindent
Recall the key steps  of the proof of this result. For any $x \in \breve{E}$,  let $\kS^x \in \Coh(\EE)$ be a simple  object of finite length supported at $x$ (which is unique, up to an isomorphism).  For any $(u, v) \in 
(P \times P) \setminus D$ and $(x, y) \in (\breve{E} \times \breve{E}), x \ne y$ consider  the triple Massey product
$$
\Hom_{\EE}\bigl(\kA^u, \kS^x) \otimes \Ext^1_{\EE}(\kS^x, \kA^v) \otimes \Hom_{\EE}\bigl(\kA^v, \kS^y) \xrightarrow{m_3(u, v; x, y)} \Hom_{\EE}\bigl(\kA^u, \kS^y)
$$
in the triangulated category $\Perf(\EE)$. Since
$\Ext^1_{\EE}(\kS^x, \kA^v)^\ast \cong \Hom_{\EE}\bigl(\kA^v, \kS^x)$ (see (\ref{E:SerreDuality})), we get from $m_3(u, v; x, y)$ a linear map
\begin{equation}
 \Hom_{\EE}\bigl(\kA^u, \kS^x) \otimes \Hom_{\EE}\bigl(\kA^v, \kS^y)
\xrightarrow{m^{u, v}_{x, y}} \Hom_{\EE}\bigl(\kA^v, \kS^x) \otimes \Hom_{\EE}\bigl(\kA^u, \kS^y).
\end{equation}
The constructed family of maps $m^{u, v}_{x, y}$ satisfies the identity
\begin{equation}\label{E:AYBEcat}
(m^{v_3, v_2}_{x_1, x_2})^{12} (m^{v_1, v_3}_{x_1, x_3})^{13} -
(m^{v_1, v_3}_{x_2, x_3})^{23} (m^{v_1, v_2}_{x_1, x_2})^{12} +
(m^{v_1, v_2}_{x_1, x_3})^{13} (m^{v_2, v_3}_{y_2, y_3})^{23} = 0,
\end{equation}
both sides of which are viewed as linear maps
$$
\Hom_{\EE}(\kA^{v_1}, \kS^{x_1}) \otimes \Hom_{\EE}(\kA^{v_2}, \kS^{x_2})  \otimes
\Hom_{\EE}(\kA^{v_3}, \kS^{x_3}) \lar
$$
$$
\lar \Hom_{\EE}(\kA^{v_2}, \kS^{x_1}) \otimes \Hom_{\EE}(\kA^{v_3}, \kS^{x_2})  \otimes
\Hom_{\EE}(\kA^{v_1}, \kS^{x_3}).
$$
Moreover,  $m^{u, v}_{x, y}$  is
non-degenerate and skew-symmetric:
\begin{equation}\label{E:SkewSymmcat}
\iota({m}^{u, v}_{x, y}) = - {m}^{v, u}_{y, x},
\end{equation}
where $\iota$ is the isomorphism $$
 \Hom_{\EE}\bigl(\kA^u, \kS^x) \otimes \Hom_{\EE}\bigl(\kA^v, \kS^y) \lar \Hom_{\EE}\bigl(\kA^v, \kS^y) \otimes \Hom_{\EE}\bigl(\kA^u, \kS^x)
$$
 given by
$\iota(f\otimes g) = g \otimes f$. Both identities (\ref{E:AYBEcat}) and (\ref{E:SkewSymmcat}) are consequences of existence of an $A_\infty$-structure on $\Perf(\EE)$ which is cyclic with respect to the Serre duality (\ref{E:SerreDuality}).  Applying appropriate canonical isomorphisms, one can identify $m^{u, v}_{x, y}$ with the linear map $\alpha(u, v; x, y)$ from the commutative diagram (\ref{E:ResidueEvaluation}). See also \cite[Theorem 2.2.17]{BK4} for a detailed exposition in a similar setting.  \qed

\section{Solutions of AYBE as a section of a vector bundle}

\noindent
Following the work \cite{BK4}, we provide a global version of the commutative diagram (\ref{E:ResidueEvaluation}). Let $$B := P \times P \times \breve{E} \times \breve{E} \setminus 
\bigl(D \times \breve{E} \times \breve{E}\bigr) \cup \bigl(P \times P \times \Xi\bigr),$$ where $D \subset P \times P$ is the locus defined by (\ref{E:Vanish}) and $\Xi \subset \breve{E} \times \breve{E}$ is the diagonal. Let $X := B \times E$. Then the canonical projection 
$X \stackrel{\pi}\lar B$ admits two canonical sections $B \stackrel{\sigma_i}\lar X$ given by 
$\sigma_i(v_1, v_2; x_1, x_2) := (v_1, v_2; x_1, x_2; x_i)$ for $i = 1, 2$. Let 
$\Sigma_i := \sigma_i(B) \subset X$ be the corresponding Cartier divisor. Note that $\Sigma_1 \cap \Sigma_2 = \emptyset$. 

\smallskip
\noindent
Similarly to (\ref{E:ResidueTrivialized}), we have the following short exact sequence
in the category $\Coh(X)$:
\begin{equation}\label{E:ResidueGlobal}
0 \lar \kO_X \lar \kO_X(\Sigma_1) \xrightarrow{\underline{\res}^\omega_{\Sigma_1}} \kO_{\Sigma_1} \lar 0. 
\end{equation}
Here, for a local section $g(v_1, v_2; x_1, x_2;  x) = \dfrac{f(v_1, v_2; x_1, x_2;  x)}{x-x_1}$ of the line bundle $\kO_X(\Sigma_1)$ we put: 
$\underline{\res}^\omega_{\Sigma_1}(g)  = \res_{x = x_1}\bigl(g\omega_x)$, where $\omega_x$ is the pull-back of $\omega$ under the canonical projection
$X \stackrel{\pi_5}\lar E$.

Consider the non-commutative scheme $\XX = \bigl(X, \pi_5^\ast(\kA)\bigr)$ as well as coherent sheaves 
 $\kA^{(i)}:= \pi_5^\ast(\kA) \otimes \pi_{i, 5}^\ast(\kL) \in \Coh(\XX)$, where 
$X \stackrel{\pi_{i, 5}}\lar P \times E$ is  the canonical projection for
$i = 1, 2$. Tensoring (\ref{E:ResidueGlobal}) by $\kA^{(2)}$, we get a short exact sequence
\begin{equation}\label{E:ResidueInduced}
0 \lar \kA^{(2)} \lar \kA^{(2)}(\Sigma_1) \lar \kA^{(2)}\Big|_{\Sigma_1} \lar 0 
\end{equation}
in the category $\Coh(\XX)$. Since $\kA^{(1)}$ is a locally projective $\kO_{\XX}$-module, applying  the functor $\kHom_{\XX}(\kA^{(1)}, -)$ to (\ref{E:ResidueInduced}), we get an induced short exact sequence
\begin{equation}\label{E:ResidueInduced2}
0 \to \kHom_{\XX}\bigl(\kA^{(1)}, \kA^{(2)}\bigr) \lar \kHom_{\XX}\bigl(\kA^{(1)}, \kA^{(2)}(\Sigma_1)\bigr) \lar \kHom_{\XX}\Bigl(\kA^{(1)}, \kA^{(2)}\Big|_{\Sigma_1}\Bigr) \to 0
\end{equation}
in the category $\Coh(X)$. Base-change isomorphism combined with the vanishing  (\ref{E:Vanish}) imply that $R\pi_\ast\bigl(\kHom_{\XX}\bigl(\kA^{(1)}, \kA^{(2)}\bigr)\bigr) = 0$, where $R\pi_\ast: D^b\bigl(\Coh(X)\bigr) \lar D^b\bigl(\Coh(X)\bigr)$ is the derived direct 
image functor. Applying the functor $\pi_\ast$ to the short exact sequence (\ref{E:ResidueInduced2}),  we get the following isomorphism
$$
\pi_\ast\Bigl(\kHom_{\XX}\bigl(\kA^{(1)}, \kA^{(2)}(\Sigma_1)\bigr)\Bigr) \stackrel{\cong} 
\lar \pi_\ast \kHom_{\XX}\Bigl(\kA^{(1)}, \kA^{(2)}\Big|_{\Sigma_1}\Bigr)
$$
of coherent sheaves on $B$.
Since $\kHom_{\XX}\Bigl(\kA^{(1)}, \kA^{(2)}\Big|_{\Sigma_1}\Bigr) \cong 
\kHom_{\XX}\Bigl(\kA^{(1)}\Big|_{\Sigma_1}, \kA^{(2)}\Big|_{\Sigma_1}\Bigr)$,  we get an isomorphism $\underline{\res}^{\kA}_{\Sigma_1}$ of coherent sheaves on $B$ (which are even locally free) given as the composition
$$
\pi_\ast\Bigl(\kHom_{\XX}\bigl(\kA^{(1)}, \kA^{(2)}(\Sigma_1)\Bigr) \stackrel{\cong} 
\lar \pi_\ast \kHom_{\XX}\Bigl(\kA^{(1)}, \kA^{(2)}\Big|_{\Sigma_1}\Bigr) \stackrel{\cong}\lar
\pi_\ast \kHom_{\XX}\Bigl(\kA^{(1)}\Big|_{\Sigma_1}, \kA^{(2)}\Big|_{\Sigma_1}\Bigr).
$$
Next, we have the following short exact sequence of coherent sheaves on $X$: 
\begin{equation}\label{E:Resid}
0 \lar \kO_X(-\Sigma_2) \lar \kO_X \lar \kO_{\Sigma_2} \lar 0.
\end{equation}
Since $\Sigma_1 \cap \Sigma_2 = \emptyset$, the canonical morphism
$\kO_{\Sigma_2} \to \kO(\Sigma_1)\big|_{\Sigma_2}$ is an isomorphism. Tensoring (\ref{E:Resid}) by $\kA^{(2)}(\Sigma_1)$, we get a short exact sequence
\begin{equation}\label{E:EvalInduced}
0 \lar \kA^{(2)}(\Sigma_1 - \Sigma_2) \lar \kA^{(2)}(\Sigma_1) \lar \kA^{(2)}\Big|_{\Sigma_2} \lar 0 
\end{equation}
in the category $\Coh(\XX)$. Applying to (\ref{E:EvalInduced}) the functor $\kHom_{\XX}(\kA^{(1)}, -)$, we get an induced short exact sequence
$$
0 \to \kHom_{\XX}\bigl(\kA^{(1)}, \kA^{(2)}(\Sigma_1 - \Sigma_2)\bigr) \lar \kHom_{\XX}\bigl(\kA^{(1)}, \kA^{(2)}(\Sigma_1)\bigr) \lar \kHom_{\XX}\Bigl(\kA^{(1)}, \kA^{(2)}\Big|_{\Sigma_1}\Bigr) \to 0
$$
in the category $\Coh(X)$. Applying the functor $\pi_\ast$, we get a  morphism of locally free sheaves $\underline{\ev}^{\kA}_{\Sigma_2}$ on $B$ given as the composition
$$
\pi_\ast\Bigl(\kHom_{\XX}\bigl(\kA^{(1)}, \kA^{(2)}(\Sigma_1)\Bigr)  
\lar \pi_\ast \kHom_{\XX}\Bigl(\kA^{(1)}, \kA^{(2)}\Big|_{\Sigma_2}\Bigr) \stackrel{\cong}\lar
\pi_\ast \kHom_{\XX}\Bigl(\kA^{(1)}\Big|_{\Sigma_2}, \kA^{(2)}\Big|_{\Sigma_2}\Bigr).
$$
In other words, we get the following global version 
\begin{equation*}
\begin{array}{c}
\xymatrix@C-C@R+3mm
{
& \pi_\ast\Bigl(\kHom_{\XX}\bigl(\kA^{(1)}, \kA^{(2)}(\Sigma_1)\Bigr)
\ar[ld]_-{\underline{\res}^{\kA}_{\Sigma_1}\phantom{x}}
\ar[rd]^-{\phantom{x}\underline{\ev}^{\kA}_{\Sigma_2}} &
\\
\pi_\ast \kHom_{\XX}\Bigl(\kA^{(1)}\Big|_{\Sigma_1}, \kA^{(2)}\Big|_{\Sigma_1}\Bigr)
\ar[rr]^-{\alpha^\kA} & &
\pi_\ast \kHom_{\XX}\Bigl(\kA^{(1)}\Big|_{\Sigma_2}, \kA^{(2)}\Big|_{\Sigma_2}\Bigr)
}
\end{array}
\end{equation*}
of the the commutative diagram  (\ref{E:ResidueEvaluation}), where 
$\alpha^\kA := \underline{\ev}^{\kA}_{\Sigma_2} \circ \bigl(\underline{\res}^{\kA}_{\Sigma_1}\bigr)^{-1}$.

\smallskip
\noindent
For any $1 \le i, j \le 2$, consider the canonical projection
$$
P \times P \times \breve{E} \times \breve{E} \stackrel{\kappa_{ij}}\lar P \times E, \; (v_1, v_2; x_1, x_2) \mapsto (v_j, x_i).
$$
Then we have the following canonical isomorphism of coherent sheaves on $B$:
$$
\pi_\ast \kHom_{\XX}\Bigl(\kA^{(1)}\Big|_{\Sigma_i}, \kA^{(2)}\Big|_{\Sigma_i}\Bigr) \cong \kA^{\langle i\rangle} \otimes \kHom_B\bigl(\kappa^{\ast}_{1i}(\kL), \kappa^{\ast}_{2i}(\kL)\bigr),
$$
where $\kA^{\langle i\rangle}$ is the pull-back of $\kA$ on $B$ via the projection morphism $$
P \times P \times \breve{E} \times \breve{E} \lar E, \; (v_1, v_2; x_1, x_2) \mapsto x_i
$$
for $i = 1, 2$. The morphism of locally free $\kO_B$-modules
$$
\kA^{\langle 1\rangle} \otimes \kHom_B\bigl(\kappa^{\ast}_{11}(\kL), \kappa^{\ast}_{21}(\kL)\bigr)
\stackrel{\alpha^\kA}\lar \kA^{\langle 2\rangle} \otimes \kHom_B\bigl(\kappa^{\ast}_{12}(\kL), \kappa^{\ast}_{22}(\kL)\bigr)
$$
determines  a distinguished section 
\begin{equation}\label{E:AYBEasSection}
\gamma^\kA \in 
\Gamma\Bigl(B, 
\kA^{\langle 1\rangle} \otimes \kA^{\langle 2\rangle} \otimes \kappa_{11}^\ast(\kL) \otimes \kappa_{21}^\ast(\kL^\vee) \otimes \kappa_{22}^\ast(\kL) \otimes\kappa_{12}^\ast(\kL^\vee)\Bigr).
\end{equation}
For $i = 1, 2$  consider the canonical projections $P \times P \times E \stackrel{\psi_i}\lar P \times E, (v_1, v_2; x) \mapsto (v_i; x)$ as well as $P \times P \times E \stackrel{\psi}\lar P \times P, (v_1, v_2; x) \mapsto (v_1, v_2)$. Then there exists $\kS \in \Pic(P \times P)$ such that 
$$
\psi_1^\ast(\kL)  \otimes \psi_2^\ast(\kL^\vee) \cong (\mu \times \mathbbm{1})^\ast \otimes \psi^\ast(\kS),
$$
where  $P \times P \stackrel{\mu}\lar P, (v_1, v_2) \mapsto v_1-v_2$.
Finally, for  $i  = 1, 2$ consider the morphism 
$$
P \times P \times \breve{E} \times \breve{E} \stackrel{\mu_i}\lar P \times \breve{E}, (v_1, v_2; x_1, x_2) \mapsto (v_1 - v_2; x_i).
$$
Then we have an isomorphism of locally free sheaves
$$
\kappa_{11}^\ast(\kL) \otimes \kappa_{21}^\ast(\kL^\vee) \otimes \kappa_{22}^\ast(\kL) \otimes\kappa_{12}^\ast(\kL^\vee) \cong \mu_1^\ast(\kL) \otimes \mu_2^\ast(\kL^\vee).
$$
In these terms, we can regard  $\gamma^\kA$ from (\ref{E:AYBEasSection}) as a section
\begin{equation}\label{E:AYBEasSection2}
\gamma^\kA \in 
\Gamma\Bigl(B, 
\kA^{\langle 1\rangle} \otimes \kA^{\langle 2\rangle} \otimes \mu_1^\ast(\kL) \otimes \mu_2^\ast(\kL^\vee)\Bigr).
\end{equation}
Applying trivialisations $\xi$ of $\kA$ (see (\ref{E:TrivialisationXi})) and $\zeta$ of $\kL$ (see (\ref{E:TrivialisationZeta})), we obtain from $\gamma^\kA$ a tensor-valued function
$$
\rho_{\xi, \zeta}^\kA:   V \times V \times U \times U \lar \lieA \otimes \lieA,
$$
which satisfies the translation  property 
$$\rho_{\xi, \zeta}^\kA(v_1+u, v_2 + u; x_1, x_2) = \varrho_{\xi, \zeta}^\kA(v_1, v_2; x_1, x_2).$$ Recall that for all types of the genus one curve $E$ (smooth, nodal or cuspidal) we have a group homomorphism $(\CC, +) \lar (P, +)$, which is locally a biholomorphic map. 

\smallskip
\noindent
After making these identifications, we get the germ of a meromorphic function  
\begin{equation}(\CC^3, 0) \stackrel{\varrho}\lar \lieA \otimes \lieA, \quad \mbox{where} \quad \varrho(v_1 - v_2; x_1, x_2) := \rho^\kA_{\xi, \zeta}(v_1, v_2; x_1, x_2).
\end{equation}
This function is a non-degenerate skew-symmetric solution of AYBE.

\smallskip
\noindent
\textbf{Summary}. Let $\EE = (E, \kA)$ be a non-commutative projective curve, where   $E$ is an irreducible projective curve of arithmetic genus one and $\kA$ be a symmetric spherical order on $E$.
Let 
 $P$ be the  Jacobian of $E$ and  $\kL$  be a universal family of degree zero line bundles on $E$. Then we have a distinguished section $\gamma^\kA \in 
\Gamma\Bigl(B, 
\kA^{\langle 1\rangle} \otimes \kA^{\langle 2\rangle} \otimes \mu_1^\ast(\kL) \otimes \mu_2^\ast(\kL^\vee)\Bigr)$. Choosing trivializations $\xi$ of $\kA$ (see (\ref{E:TrivialisationXi})) and $\zeta$ of $\kL$ (see (\ref{E:TrivialisationZeta})), we get the germ of a meromorphic function $(\CC^3, 0) \stackrel{\varrho}\lar \lieA \otimes \lieA$, which is a non-degenerate skew-symmetric solution of AYBE.
A different trivialization $\tilde\xi$ of $\kA$ leads to a gauge-equivalent solution
$(\varphi(x_1) \otimes \varphi(x_1)\bigr) \varrho(v; x_1, x_2)$, where  
$(\CC, 0) \stackrel{\varphi}\lar \Aut_{\CC}(\lieA)$ is the germ of $\tilde{\xi} \xi^{-1}$.  Analogously, another choice of a trivialization $\zeta$ leads to an equivalent solution
$\exp\bigl(v (\beta(x_1)-\beta(x_2))\bigr)\varrho(v; x_1, x_2)$ for some 
holomorphic $(\CC, 0) \stackrel{\beta}\lar \CC$.

\begin{remark}
The simplest example of a symmetric spherical order  is $\kA = \kEnd_E(\kF)$, where $\kF$ is a simple vector bundle on $E$ of rank $n$ and degree $d$. It follows from  \cite{Atiyah, Burban1, BodnarchukDrozd} that such $\kF$ exists if and only if $n$ and $d$ are coprime and the sheaf of algebras $\kA = \kA_{(n, d)}  = \kEnd_E(\kF)$ does not depend on the choice of $\kF$. Moreover, according to \cite[Proposition 1.8.1]{Polishchuk3}, any symmetric spherical order on an elliptic curve $E$ is isomorphic to $\kA_{(n, d)}$ for some $0 < d < n$  mutually prime. 
\end{remark}

\begin{remark} Let $(u, v) \in P \times P \setminus D$ and 
$(x, y) \in \breve{E} \times \breve{E} \setminus \Xi$. Then we have  canonical isomorphisms 
$$\Hom_{\EE}\bigl(\kA^u\big|_x,  \kA^v\big|_x\bigr) \cong 
H^0(E, \kHom_{\EE}\bigl(\kA^u\big|_x,  \kA^v\big|_x\bigr) \cong H^0\bigl(E, \kA^{v-u}([x])\bigr).
$$
Analogously, we have canonical isomorphisms
$$
\Hom_{\EE}\bigl(\kA^u\big|_x,  \kA^v\big|_x\bigr) \cong \kA^{v-u}\big|_x
\quad \mbox{\rm and} \quad \Hom_{\EE}\bigl(\kA^u\big|_y,  \kA^v\big|_y\bigr) \cong \kA^{v-u}\big|_y
$$
such that the following diagram 
\begin{equation}\label{E:ResidueEvaluationRewritten}
\begin{array}{c}
\xymatrix@C-C@R+3mm
{
\Hom_{\EE}\bigl(\kA^u\big|_x,  \kA^v\big|_x\bigr) \ar[d]_-\cong & & & & & & & & \Hom_{\EE}\bigl(\kA^u, \kA^v(x)\bigr) \ar[d]_-\cong
\ar[llllllll]_-{\underline{\res}^{\kA}(u,v; x)\phantom{x}}
\ar[rrrrrrrr]^-{\phantom{x}\underline{\ev}^{\kA}(u,v; x, y)} & & & & & & & & \Hom_{\EE}\bigl(\kA^u\big|_y,  \kA^v\big|_y\bigr) \ar[d]^-\cong
\\
\kA^{v-u}\big|_x  & & & & & & & & \ar[llllllll]_-{\res^{v-u}_x} H^0\bigl(E, \kA^{v-u}([x])\bigr) \ar[rrrrrrrr]^-{\ev^{v-u}_y} & & & & & & & &  \kA^{v-u}\big|_y
}
\end{array}
\end{equation}
is commutative. Here, the linear maps $\res^{v-u}_x$ and $\ev^{v-u}_y$
are induced by the standard short exact sequences (\ref{E:CanonicalSequences}).
\end{remark}

\section{Elliptic solutions of AYBE}

\smallskip
\noindent
Let $\tau \in \CC$ be such that $\mathsf{Im}(\tau) > 0$, $\CC \supset \Lambda  = \langle 1, \, \tau\rangle \cong \ZZ^2$ and $E = E_\tau = \CC/\Lambda$. Recall some  standard  techniques to deal with holomorphic vector bundles on complex tori.  An
\emph{automorphy factor} is a pair
$(A, V)$, where $V$ is a finite dimensional vector space over $\CC$ and
$
A: \Lambda \times \CC \lar \GL(V)
$
is a holomorphic function such that $A(\lambda + \mu, z) = A(\lambda, z + \mu) A(\mu, z)$ for all
$\lambda, \mu \in \Lambda$ and $z \in \CC$. Such  $(A, V)$ defines the following  holomorphic vector bundle on
the torus $E$:
\begin{equation*}
\kE(A, V) := \CC \times V/\sim, \;
\mbox{where} \quad (z, v) \sim \bigl(z + \lambda, A(\lambda, z) v\bigr) \quad
\mbox{for all} \quad (\lambda, z, v) \in \Lambda \times \CC \times V.
\end{equation*}
Given two automorphy factors $(A, V)$ and $(B, V)$, the corresponding  vector bundles $\kE(A, V)$ and $\kE(B, V)$ are isomorphic if and only if
there exists a holomorphic function $H: \CC \rightarrow \GL(V)$ such that
$$
B(\lambda, z) = H(z + \lambda) A(\lambda, z) H(z)^{-1} \quad
\mbox{for all} \quad (\lambda, z) \in \Lambda\times \CC.
$$

 Let $\Phi: \CC \longrightarrow
\GL_n(\CC)$ be a holomorphic function such that $\Phi(z+1) = \Phi(z)$ for all $z \in \CC$.
Then one can define the automorphy factor
$(A, \CC^n)$ in the following way.

\medskip
\noindent
$-$
$A(0, z) = I_n$ (the identity $n \times n$ matrix).

\medskip
\noindent
$-$ For any $k \in \NN_{0}$ we set:
$$
A(k\tau, z) = \Phi\bigl(z + (k-1)\tau\bigr) \dots  \Phi(z) \;
\mbox{and} \; A(-k\tau, z) = A(k \tau, z-k \tau)^{-1}.
$$

\smallskip
\noindent
For  a proof of the following result, see \cite[Proposition 5.1]{BH}.

\begin{proposition} Let $0 < d < n$ be coprime. Then the  sheaf  of orders $\kA = \kA_{(n, d)}$ has the following  description:
\begin{equation}
\kA \cong  \CC \times \lieA/\sim, \quad \mbox{where} \quad (z, Z) \sim (z+1, \mathsf{Ad}_X(Z)) \sim
(z + \tau, \mathsf{Ad}_Y(Z)),
\end{equation}
$X$ and $Y$ are  matrices given by (\ref{E:matricesXY}) and 
$\mathsf{Ad}_T(Z) = T Z T^{-1}$ for $T \in \bigl\{X, Y\bigr\}$ and $Z \in \lieA$.
\end{proposition}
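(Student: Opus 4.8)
The plan is to realise $\kA_{(n,d)}$ as $\kEnd_E(\kF)$ for one explicitly chosen simple vector bundle $\kF$ of rank $n$ and degree $d$ on $E = E_\tau$, presented through an automorphy factor, and then to read off the automorphy factor of $\kEnd_E(\kF)$. By the Remark above (based on \cite{Atiyah, Burban1, BodnarchukDrozd}) such a bundle $\kF$ exists since $\gcd(n,d)=1$, and $\kEnd_E(\kF)$ does not depend on the choice of $\kF$; so it is enough to exhibit one convenient model and compute with it.

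First I would write down the automorphy factor. A direct computation with the matrices in \eqref{E:matricesXY} gives the commutation rule $XY = \varepsilon^{-1}YX$, where $\varepsilon = \exp(2\pi i d/n)$. Using it one checks that
\[
A(1,z) = X, \qquad A(\tau, z) = \exp\!\Bigl(-\tfrac{2\pi i d}{n}z\Bigr) Y
\]
satisfies the relation $A(1, z+\tau)A(\tau, z) = A(\tau, z+1)A(1,z)$ on the generators $1,\tau$ of $\Lambda$ --- indeed this reduces to $\exp\!\bigl(-\tfrac{2\pi i d}{n}z\bigr)XY = \exp\!\bigl(-\tfrac{2\pi i d}{n}(z+1)\bigr)YX$, which is exactly the commutation rule. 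Since this single identity is the only consistency condition needed to extend $A$ from the generators to all of $\Lambda$, the pair $(A, \CC^n)$ is a genuine automorphy factor, and I set $\kF := \kE(A, \CC^n)$, a holomorphic rank $n$ bundle on $E$.

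Next I would check that $\kF$ is simple of degree $d$. For the degree, $\det \kF = \kE(\det A, \CC)$ is the line bundle with $\det A(1,z) = \det X$ a constant and $\det A(\tau, z) = (-1)^{n-1}\exp(-2\pi i d z)$; since a line bundle on $E_\tau$ whose automorphy factor has the form $\bigl(\text{const},\ \text{const}\cdot \exp(-2\pi i m z)\bigr)$ has degree $m$, one gets $\deg \kF = d$. For simplicity, observe that a global endomorphism of $\kF$ is a holomorphic map $M\colon \CC \to \lieA$ with $M(z+1) = XM(z)X^{-1}$ and $M(z+\tau) = YM(z)Y^{-1}$ (the scalar in $A(\tau,z)$ cancels under conjugation). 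Since $X^n = I_n$, the first relation makes $M$ invariant under $z \mapsto z + n$, hence $M(z) = \sum_{j \in \ZZ} M_j \exp\!\bigl(\tfrac{2\pi i}{n}jz\bigr)$ with $M_j \in \lieA$ (uniqueness of the Laurent expansion on $\CC^\ast$); matching coefficients in the two relations yields $\exp\!\bigl(\tfrac{2\pi i}{n}j\bigr)M_j = XM_jX^{-1}$ and $\exp\!\bigl(\tfrac{2\pi i}{n}j\tau\bigr)M_j = YM_jY^{-1}$ for every $j$. The first forces each $M_j$ into a single eigenspace of $\mathsf{Ad}_X$ (a ``broken diagonal''); iterating the second $n$ times gives $\exp(2\pi i j\tau)M_j = Y^nM_jY^{-n} = M_j$, and because $\mathsf{Im}(\tau) > 0$ we have $\exp(2\pi i j\tau) \ne 1$ for $j \ne 0$, so $M_j = 0$ for all $j \ne 0$. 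Finally $M_0$ is diagonal and commutes with the cyclic matrix $Y$, hence is scalar; thus $\kEnd(\kF) \cong \CC$ and $\kF$ is simple.

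It remains to identify the endomorphism bundle. As $\kEnd_E(\kF) \cong \kF \otimes_{\kO_E} \kF^\vee$, it corresponds to the automorphy factor $(\lambda, z) \mapsto \mathsf{Ad}_{A(\lambda,z)}$ on $\lieA$, and here $\mathsf{Ad}_{A(1,z)} = \mathsf{Ad}_X$ while $\mathsf{Ad}_{A(\tau,z)} = \mathsf{Ad}_{\exp(-\frac{2\pi i d}{n}z)Y} = \mathsf{Ad}_Y$ since the scalar disappears. This is precisely the presentation $\kA \cong \CC \times \lieA/\sim$ with $(z,Z) \sim (z+1, \mathsf{Ad}_X(Z)) \sim (z+\tau, \mathsf{Ad}_Y(Z))$ asserted in the statement, and by the independence of $\kEnd_E(\kF)$ from $\kF$ this computes $\kA_{(n,d)}$. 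I expect the main obstacle to be the simplicity verification --- equivalently, the fact that $X$ and $Y$ generate an irreducible projective representation, so that the only matrices commuting with both are scalars --- together with pinning down the sign conventions so the degree comes out as $d$ rather than $n-d$; once the Fourier argument is in place, both are routine.
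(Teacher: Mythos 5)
Your proof is correct, and it is essentially the standard automorphy-factor argument: the paper itself gives no proof of this proposition but delegates it to \cite[Proposition 5.1]{BH}, where the order is likewise obtained as $\kEnd_E(\kF)$ for a rank-$n$, degree-$d$ bundle $\kF$ presented by the factor $A(1,z)=X$, $A(\tau,z)=\exp\bigl(-\tfrac{2\pi i d}{n}z\bigr)Y$, with the scalar cancelling under $\mathsf{Ad}$. Your verification of the cocycle condition via $XY=\varepsilon^{-1}YX$, of $\deg\kF=d$, and of simplicity via the Fourier expansion and the irreducibility of the $\langle X,Y\rangle$-action all check out.
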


\smallskip
\noindent
For any $(k, l) \in I := \bigl\{1, \dots, n\bigr\} \times \bigl\{1, \dots, n\bigr\}$ denote
$
Z_{(k, l)} = Y^k X^{-l}$ and $
Z_{(k, l)}^\vee  = \frac{1}{n} X^{l} Y^{-k}.
$
Note that the operators $\mathsf{Ad}_X, \mathsf{Ad}_Y \in \End_{\CC}(\lieA)$ commute. Moreover,   $$
\mathsf{Ad}_X(Z_{(k, l)}) = \varepsilon^{k} Z_{(k, l)} \quad
\mbox{and} \quad
\mathsf{Ad}_{Y}(Z_{(k, l)}) = \varepsilon^{l} Z_{(k, l)}
$$
for any $(k, l) \in I$. As a consequence $\bigl(Z_{(k, l)}\bigr)_{(k, l) \in I}$ is a basis of 
$\lieA$.

\smallskip
\noindent
Let $\can: \lieA \otimes \lieA \lar \End_{\CC}(\lieA)$ be the canonical isomorphism
sending a simple tensor $Z' \otimes Z''$ to the linear map $Z \mapsto \tr(Z' \cdot Z) \cdot Z''$.
Then we have:
\begin{equation}\label{E:HeisBasis}
\can(Z_{(k,l)}^\vee \otimes Z_{(k, l)})(Z_{(k', l')}) = \left\{
\begin{array}{cl}
Z_{(k, l)} & \mbox{\rm{if}} \quad  (k', l') = (k, l) \\
0 & \mbox{\rm{otherwise}}.
\end{array}
\right.
\end{equation}

\noindent
Recall the expressions for the first and  third Jacobian theta-functions (see e.g. \cite{MumfordTheta}):
\begin{equation}\label{E:Theta}
\left\{\begin{array}{l}
\bar{\theta}(z) = \theta_1(z|\tau) = 2 q^{\frac{1}{4}} \sum\limits_{n=0}^\infty (-1)^n q^{n(n+1)}
\sin\bigl((2n+1)\pi z\bigr), \\
\theta(z) = \theta_3(z|\tau) = 1+  2 \sum\limits_{n=1}^\infty  q^{n^2}
\cos(2\pi nz),
\end{array}
\right.
\end{equation}
where $q = \exp(\pi i \tau)$. They are related by the following identity:
\begin{equation}\label{E:RelBetwTheta}
\theta\Bigl(z + \frac{1 + \tau}{2}\Bigr) = i \exp\Bigl(-\pi i \bigl(z + \frac{\tau}{4}\bigr)\Bigr) \bar{\theta}(z).
\end{equation}

\begin{lemma}\label{L:Theta} For any $x \in \CC$ consider the function $\varphi_x(w) = - \exp\bigl(- 2\pi i (w + \tau - x)\bigr)$. Then the  following results are true.
\begin{itemize}
\item The vector space
\begin{equation}
\left\{\CC \stackrel{f}\lar \CC
\left|
\begin{array}{l}
f \, \mbox{ \rm{is holomorphic}} \\
f(w+1) = f(w) \\
f(w+\tau) = \varphi_x(w) f(w)
\end{array}
\right.
  \right\}
\end{equation}
  is one-dimensional and generated by  $\theta_x(w) := \theta\bigl(w + \frac{1 + \tau}{2} - x\bigr)$.
  \item
We have:
$
\kE(\varphi_x) \cong \kO_E\bigl([x]\bigr).
$
\item For $a, b \in \mathbb{R}$ let $v = a\tau + b \in \CC$ and $[v] = \upsilon(v) \in E$. Then we have: 
\begin{equation}\label{E:FamilyDegreeZero}
\kE(\exp(-2\pi i v)) \cong 
\kO_E\bigl([0]-[v]\bigr).
\end{equation}
In these terms we also get a description of a universal family $\kL$ of degree zero line bundles on $E$.
\end{itemize}
\end{lemma}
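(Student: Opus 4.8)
The plan is to prove the three assertions of Lemma~\ref{L:Theta} in order, using the standard theory of automorphy factors for line bundles on the elliptic curve $E = E_\tau$.

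\smallskip
\noindent
\textit{First assertion.} I would start from the well-known transformation law of the Jacobi theta-function $\theta = \theta_3$: one has $\theta(z+1) = \theta(z)$ and $\theta(z+\tau) = \exp\bigl(-\pi i \tau - 2\pi i z\bigr)\theta(z)$. Substituting $z = w + \frac{1+\tau}{2} - x$ into the quasi-periodicity in the $\tau$-direction yields, after a short computation with the exponential, exactly $\theta_x(w+\tau) = \varphi_x(w)\,\theta_x(w)$, while $\theta_x(w+1) = \theta_x(w)$ is immediate since a shift of $w$ by $1$ shifts $z$ by $1$ (note $\theta(z+1)=\theta(z)$). Hence $\theta_x$ lies in the space in question, so the space is nonzero. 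For the dimension bound: the ratio $f/\theta_x$ of any two elements of the space is a $\Lambda$-periodic meromorphic function on $\CC$, i.e.\ an elliptic function, with at most one simple pole (the single zero of $\theta_x$ on a fundamental domain); an elliptic function of degree $\le 1$ is constant, so the space is one-dimensional. Equivalently, this space is $H^0(E, \kE(\varphi_x))$ and $\kE(\varphi_x)$ has degree $1$ (the automorphy factor has ``winding number'' $1$ in the $\tau$-direction), whence $h^0 = 1$ by Riemann--Roch.

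\smallskip
\noindent
\textit{Second assertion.} To identify $\kE(\varphi_x) \cong \kO_E([x])$, I would observe that $\theta_x$ has a single simple zero on $E$, located at the point $w$ with $w + \frac{1+\tau}{2} - x \equiv \frac{1+\tau}{2} \pmod{\Lambda}$, i.e.\ at $w \equiv x$. Thus the nonzero section $\theta_x \in H^0(E, \kE(\varphi_x))$ vanishes to order one exactly at $[x]$, which exhibits an isomorphism $\kE(\varphi_x) \cong \kO_E([x])$ (a degree-one line bundle with a section vanishing simply at $[x]$ is $\kO_E([x])$). Alternatively one can write down the isomorphism of automorphy factors directly: $\kO_E([x])$ is represented by the automorphy factor coming from $\theta\bigl(w+\frac{1+\tau}{2}-x\bigr)$ by the classical dictionary between divisors and theta-functions, and this is $\varphi_x$ by the computation above.

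\smallskip
\noindent
\textit{Third assertion.} For $v = a\tau + b$, the line bundle $\kE(\exp(-2\pi i v))$ (constant automorphy factor $\Phi \equiv 1$, and $A(k\tau, w) = \exp(-2\pi i k v)$ — here I am using the constant automorphy factor with the notation of the paragraph preceding the lemma) has winding number $0$, hence degree $0$; I would then compute its image under the Abel--Jacobi map. The cleanest route is to take the tensor quotient: by the first two parts, $\kE(\varphi_0)\otimes \kE(\varphi_v)^\vee$ has automorphy factor $\varphi_0(w)\varphi_v(w)^{-1} = \exp\bigl(-2\pi i(w+\tau) + 2\pi i (w+\tau - v)\bigr)\cdot(-1)/(-1) = \exp(-2\pi i v)$, and simultaneously equals $\kO_E([0])\otimes\kO_E([v])^\vee = \kO_E([0]-[v])$. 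This is precisely \eqref{E:FamilyDegreeZero}. Letting $v$ vary, the family $\bigl(\kE(\exp(-2\pi i v))\bigr)_{v}$ over $\CC$ descends (since $v \mapsto v+\lambda$ for $\lambda \in \Lambda$ changes the automorphy factor by a coboundary) to a family over $E \cong P = \underline{\Pic}^0(E)$, and the displayed isomorphism shows it is the Poincar\'e/universal family $\kL$ up to the usual normalization along $\{o\}\times E$ and a twist by a line bundle pulled back from the base.

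\smallskip
\noindent
The only genuinely delicate point is bookkeeping the exponential prefactors in the theta quasi-periodicity and making sure the ``$-1$'' in $\varphi_x(w) = -\exp(-2\pi i(w+\tau-x))$ is reproduced correctly; this sign is exactly what is needed so that $\varphi_x$ is an automorphy factor for $\kO_E([x])$ rather than for a translate, and it is pinned down by the half-period shift $\frac{1+\tau}{2}$ together with identity \eqref{E:RelBetwTheta}. Everything else is a routine application of Riemann--Roch on an elliptic curve and the divisor--theta-function dictionary.
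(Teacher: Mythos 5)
Your proposal is correct: the quasi-periodicity computation $\theta_x(w+\tau)=\varphi_x(w)\theta_x(w)$ (with the sign $-1=e^{-\pi i}$ coming from the half-period shift), the dimension count via the degree-$1$ line bundle, the identification of the zero of $\theta_x$ at $[x]$, and the quotient $\varphi_0\varphi_v^{-1}=\exp(-2\pi i v)$ giving \eqref{E:FamilyDegreeZero} are all accurate. The paper does not prove the lemma but defers to Mumford's \emph{Tata lectures} and to \cite[Section 4.1]{BK4}; your argument is precisely the standard theta-function/automorphy-factor argument found there, so there is nothing to add.
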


\smallskip
\noindent
A proof of these statements can be for instance found in 
\cite{MumfordTheta} or \cite[Section 4.1]{BK4}.

\smallskip
\noindent
 Let $U \subset \CC$ be a small open neighborhood of $0$ and
$O = \Gamma(U, \kO_\CC)$ be the ring of holomorphic functions on $U$. Let $z$ be a coordinate on $U$,
$\CC \stackrel{\eta} \lar E$ be the canonical covering map, $\omega = dz \in H^0(E, \Omega)$ and 
$\Gamma(U, \kA) \stackrel{\xi}\lar \lieA \otimes_{\CC} O$ be the standard  trivialization induced by the automorphy
data $(\mathsf{Ad}_X, \mathsf{Ad}_Y)$. One can also define a trivialization 
$\zeta$  of the universal  family $\kL$ of degree zero line bundles on $E$ compatible with the isomorphisms
 (\ref{E:FamilyDegreeZero}).

\smallskip
\noindent
 Consider the following vector space
$$
\Sol\bigl((n, d), v, x\bigr) =
\left\{\CC \stackrel{F}\lar \lieA
\left|
\begin{array}{l}
F \mbox{\textrm{\quad is holomorphic}} \\
F(w+1) = \mathsf{Ad}_X\bigl(F(w)\bigr)\\
F(w+\tau) = \varphi_{x-v}(w) \mathsf{Ad}_Y\bigl(F(w)\bigr)
\end{array}
\right.
  \right\}.
  $$
\begin{proposition}\label{P:SolutionsElliptiques} The following diagram
\begin{equation}
\begin{array}{c}
\xymatrix{
\kA^v\big|_x \ar[d]_{\jmath^v_x} & & H^0\bigl(\kA^v(x)\bigr) 
\ar[ll]_-{\res^v_x} \ar[rr]^-{\ev^v_y} \ar[d]^{\jmath} & & \kA^v\big|_y \ar[d]^{\jmath^v_y} \\
\lieA & & \Sol\bigl((n, d), v, x\bigr) \ar[ll]_-{\overline{\res}_x} \ar[rr]^-{\overline{\ev}_y} &  & \lieA
}
\end{array}
\end{equation}
is commutative, where for $F \in \Sol\bigl((n, d), v,  x\bigr)$ we have:
\begin{equation*}
\overline{\res}_x(F) = \frac{F(x)}{\theta'\bigl(\frac{1 + \tau}{2}\bigr)}
\quad \mbox{\rm{and}} \quad
\overline{\ev}_y(F) = \frac{F(y)}{\theta\bigl(y - x + \frac{1 + \tau}{2}\bigr)}.
\end{equation*}
The isomorphisms of vector spaces  $\jmath^v_x, \jmath^v_y$ and $\jmath$ are  induced by the trivializations $\xi$ and $\zeta$ as well as the pull-back functor  $\eta^*$.
\end{proposition}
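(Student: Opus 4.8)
The strategy is to rewrite the whole diagram in the statement of Proposition~\ref{P:SolutionsElliptiques} in terms of automorphy factors and then read the horizontal maps of the top row directly off the theta-function picture.

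First I would make the three vertical maps explicit. Combining the description of $\kA = \kA_{(n,d)}$ via the automorphy datum $(\mathsf{Ad}_X, \mathsf{Ad}_Y)$ obtained above with the isomorphisms $\kO_E([x]) \cong \kE(\varphi_x)$ of Lemma~\ref{L:Theta} and $\kL^v \cong \kE\bigl(\exp(-2\pi i v)\bigr)$ coming from~(\ref{E:FamilyDegreeZero}) (whose automorphy factor is constant), and using $\exp(-2\pi i v)\,\varphi_x(w) = \varphi_{x-v}(w)$, one finds that $\kA^v(x) = \kA \otimes_{\kO} \kL^v \otimes_{\kO} \kO_E([x])$ is given by the automorphy datum $\bigl(\mathsf{Ad}_X,\ \varphi_{x-v}(w)\,\mathsf{Ad}_Y\bigr)$. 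Hence, after applying $\eta^\ast$ and the trivializations $\xi$ of $\kA$ and $\zeta$ of $\kL$, a global section $\sigma \in H^0(\kA^v(x))$ --- regarded as a rational section of $\kA^v$ with a pole of order at most one along $[x]$ --- turns into a meromorphic $\lieA$-valued function $\widetilde{\sigma}$ on $\CC$ with at most simple poles along $x + \Lambda$, and $F := \theta_x \cdot \widetilde{\sigma}$ is holomorphic and lies in $\Sol\bigl((n,d), v, x\bigr)$, where $\theta_x(w) = \theta\bigl(w + \tfrac{1+\tau}{2} - x\bigr)$ is the generator of $H^0(\kE(\varphi_x))$ from Lemma~\ref{L:Theta}, which has a simple zero at $w = x$. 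The assignment $\sigma \mapsto F$ is the isomorphism $\jmath$, while $\jmath^v_x$ and $\jmath^v_y$ are the identifications $\kA^v|_x \cong \lieA$ and $\kA^v|_y \cong \lieA$ induced by $\xi$ and $\zeta$.

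Next I would unwind the two horizontal maps. By construction $\res^v_x$ is induced by $\underline{\res}^\omega_x \otimes \id_{\kA^v}$ applied to the sequence~(\ref{E:ResidueTrivialized}); since $\eta^\ast\omega = dw$, in the trivialization above it is the map $\sigma \mapsto \res_{w = x}\bigl(\widetilde{\sigma}(w)\,dw\bigr) \in \lieA$. As $\widetilde{\sigma} = F/\theta_x$ and $\theta_x$ has a simple zero at $w = x$ with $\theta_x'(x) = \theta'\bigl(\tfrac{1+\tau}{2}\bigr) \ne 0$, this residue equals $F(x)/\theta'\bigl(\tfrac{1+\tau}{2}\bigr) = \overline{\res}_x(F)$, which is the commutativity of the left-hand square. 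Likewise $\ev^v_y$ is, by construction, the restriction of $\sigma$ to the fibre over $y$; as $\theta_x$ is holomorphic and non-vanishing near $w = y$, in the trivialization this is $\widetilde{\sigma}(y) = F(y)/\theta_x(y) = F(y)/\theta\bigl(y - x + \tfrac{1+\tau}{2}\bigr) = \overline{\ev}_y(F)$, which is the commutativity of the right-hand square.

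The point that needs care is the bookkeeping with trivializations: one must verify that, under the automorphy-factor identifications, the inclusion $\kA^v \hookrightarrow \kA^v(x)$ underlying~(\ref{E:ResidueTrivialized}) is multiplication by $\theta_x$ --- this is precisely what produces the normalizing factor $1/\theta'\bigl(\tfrac{1+\tau}{2}\bigr)$ in $\overline{\res}_x$ --- and that the quasi-periodicity of $\theta_3$ recorded in~(\ref{E:Theta}) matches the automorphy factor $\varphi_x$ of $\kO_E([x])$ with the correct sign; both are short direct computations. Once these compatibilities are in place, commutativity of the entire diagram follows by chasing the definitions already assembled in the excerpt.
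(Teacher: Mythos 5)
Your argument is correct and is essentially the computation the paper delegates to \cite[Corollary 4.2.1]{BK4}: identify $\kA^v(x)$ with the automorphy datum $\bigl(\mathsf{Ad}_X,\ \varphi_{x-v}(w)\,\mathsf{Ad}_Y\bigr)$ so that $H^0(\kA^v(x)) \cong \Sol\bigl((n,d),v,x\bigr)$, realize the inclusion $\kA^v \hookrightarrow \kA^v(x)$ as multiplication by $\theta_x$, and read off the residue at $w=x$ and the evaluation at $w=y$ as $F(x)/\theta'\bigl(\tfrac{1+\tau}{2}\bigr)$ and $F(y)/\theta\bigl(y-x+\tfrac{1+\tau}{2}\bigr)$. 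The checks you flag (the sign in the quasi-periodicity of $\theta_3$ and the normalization via $\theta_x'(x)$) are exactly the short verifications needed, so the proposal matches the intended proof.
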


\noindent
\emph{Comment on the proof}. Since an  analogous result is proven in \cite[Corollary 4.2.1]{BK4}, we omit details here. \qed

\medskip
\noindent
Now we are prepared to prove the main result of this work.

\begin{theorem}\label{T:main} Let  $r_{((n, d), \tau)}(v; x, y)$ be the solution of AYBE corresponding to the datum $\bigl(E_\tau, \kA_{(n, d)}\bigr)$ with respect to the trivializations $\xi$ (respectively,  $\zeta$) of 
$\kA$ (respectively,  $\kL$) introduced above. Then it
 is given by the  expression (\ref{E:SolutionElliptic}).
\end{theorem}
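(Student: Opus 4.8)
The plan is to compute the distinguished section $\gamma^\kA$ of Section~3 explicitly in the elliptic case, tracking it through the commutative diagram of Proposition~\ref{P:SolutionsElliptiques}, and then to identify the resulting tensor-valued function with the right-hand side of (\ref{E:SolutionElliptic}). First I would fix the trivializations: $\xi$ induced by the automorphy data $(\mathsf{Ad}_X, \mathsf{Ad}_Y)$ and $\zeta$ compatible with (\ref{E:FamilyDegreeZero}). The key point is that, by the last remark of Section~2 (the commutative diagram (\ref{E:ResidueEvaluationRewritten})) together with Proposition~\ref{P:SolutionsElliptiques}, the map $\alpha(u,v;x,y)$ is intertwined with the composition $\overline{\ev}_y \circ \overline{\res}_x^{-1}$ acting on the space $\Sol\bigl((n,d), w, x\bigr)$, where $w = v - u$ is the relevant difference of spectral parameters. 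So the whole computation reduces to: (i) writing down an explicit basis of $\Sol\bigl((n,d), w, x\bigr)$ in terms of theta-functions and the Heisenberg basis $\bigl(Z_{(k,l)}\bigr)_{(k,l)\in I}$; (ii) applying $\overline{\res}_x$ and $\overline{\ev}_y$ to this basis; (iii) assembling the matrix of $\alpha$ and dualizing via the trace form (using (\ref{E:HeisBasis})) to obtain the tensor $\rho(u,v;x,y)\in\lieA\otimes\lieA$.

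For step (i), I would look for elements of $\Sol\bigl((n,d),w,x\bigr)$ of the form $F_{(k,l)}(w') = g_{(k,l)}(w')\, Z_{(k,l)}$, where $g_{(k,l)}$ is scalar-valued; the quasi-periodicity conditions $F(w'+1) = \mathsf{Ad}_X(F(w')) = \varepsilon^k F(w')$ and $F(w'+\tau) = \varphi_{x-v}(w')\,\mathsf{Ad}_Y(F(w')) = \varepsilon^l\varphi_{x-v}(w')F(w')$ translate into a scalar quasi-periodic problem for $g_{(k,l)}$ whose solution space is one-dimensional and spanned by a suitable shifted third theta-function (an application of Lemma~\ref{L:Theta} with the shift dictated by $\varepsilon = \exp(2\pi i d/n)$, i.e. by $\frac{d}{n}(k\tau + l)$, which is exactly the argument appearing in $\sigma$). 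Summing over $(k,l)\in I$ gives a basis of the $n^2$-dimensional space $\Sol\bigl((n,d),w,x\bigr) \cong H^0\bigl(E,\kA^w([x])\bigr)$. Then $\overline{\res}_x(F_{(k,l)})$ and $\overline{\ev}_y(F_{(k,l)})$ are scalar multiples of $Z_{(k,l)}$ by the formulas in Proposition~\ref{P:SolutionsElliptiques}, so $\alpha$ is diagonal in the Heisenberg basis and its $(k,l)$-eigenvalue is a ratio of theta-values; the exponential prefactor $\exp\bigl(\frac{2\pi i d}{n} k x\bigr)$ will emerge from the normalization of $g_{(k,l)}$ and the compatibility of $\zeta$ with (\ref{E:FamilyDegreeZero}).

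For step (iii), dualizing the diagonal map $\alpha$ via $\can$ and (\ref{E:HeisBasis}) immediately produces a sum $\sum_{(k,l)\in I} c_{(k,l)}(v;x,y)\, Z_{(k,l)}^\vee \otimes Z_{(k,l)}$, and it remains to check that the scalar $c_{(k,l)}$ equals $\exp\bigl(\frac{2\pi i d}{n} k x\bigr)\,\sigma\bigl(v + \frac{d}{n}(k\tau+l), x\bigr)$ with $x = x_2 - x_1$; this is a theta-function identity relating the quotient $\theta_x(y)/\theta'\bigl(\tfrac{1+\tau}{2}\bigr)$-type expression to the Kronecker function $\sigma$ via (\ref{E:KroneckerEllFunct}) and (\ref{E:RelBetwTheta}). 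The main obstacle I anticipate is precisely this last bookkeeping: getting all the shifts, signs, the factor $\frac{1}{n}$ in $Z_{(k,l)}^\vee$, and the $q^{1/4}$-type prefactors in (\ref{E:Theta}) to line up so that the ratio of theta-functions collapses to the series (\ref{E:KroneckerEllFunct}) — in particular verifying that the combination $w + \frac{d}{n}(k\tau+l)$ appears as a single argument of $\sigma$ rather than split across several theta-factors. Once the scalar coefficient is pinned down for one representative $(k,l)$, periodicity in $(k,l)$ modulo $n$ and the translation property $\varrho(v_1 + u, v_2 + u;\ldots) = \varrho(v_1,v_2;\ldots)$ noted in Section~3 finish the identification with (\ref{E:SolutionElliptic}).
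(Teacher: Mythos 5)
Your proposal follows essentially the same route as the paper: reduce via Proposition~\ref{P:SolutionsElliptiques} to the space $\Sol\bigl((n,d),v,x\bigr)$, diagonalize in the Heisenberg basis $Z_{(k,l)}$ using Lemma~\ref{L:Theta} to get one theta-function per $(k,l)$, apply $\overline{\res}_x$ and $\overline{\ev}_y$, dualize via (\ref{E:HeisBasis}), and finish with the theta-identities (\ref{E:RelBetwTheta}) and the quasi-periodicity of $\sigma$ (the paper also uses the substitution $k\mapsto n-k$ you allude to). The bookkeeping you flag as the main obstacle is exactly what the paper's proof carries out, and your outline is correct.
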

\begin{proof}

We first compute an explicit basis of the vector space 
$\Sol\bigl((n, d),v,  x\bigr)$.
Let
$$
F(w) = \sum\limits_{(k, l) \in I} f_{(k, l)}(w)  Z_{(k, l)}.
$$
The condition $F \in \Sol\bigl((n, d), v, x\bigr)$ yields the following constraints on
 the coefficients $f_{(k, l)}$:
\begin{equation}\label{E:EllSystem}
\left\{
\begin{array}{lcl}
f_{(k, l)}(w+1) & = &  \varepsilon^k f_{(k, l)}(w) \\
f_{(k, l)}(w+\tau) & = &  \varepsilon^l \varphi_{x-v}(w) f_{(k, l)}(w).
\end{array}
\right.
\end{equation}
It follows from Lemma \ref{L:Theta} that the vector space of holomorphic solutions of the system (\ref{E:EllSystem})
is one-dimensional and  generated by the function
$$
f_{(k, l)}(w) = \exp\Bigl(- \frac{2 \pi i d}{n} kw\Bigr) \theta\Bigl(
w + \frac{1+ \tau}{2} + v - x - \frac{d}{n}(k \tau - l)\Bigr).
$$
From Proposition \ref{P:SolutionsElliptiques} and formula (\ref{E:HeisBasis}) it follows that $r_{((n, d), \tau)}(v; x, y)$ is given by the following expression:
\begin{equation*}
r_{((n, d), \tau)}(v; x, y) = \sum\limits_{(k, l) \in I}
r_{(k, l)}(v; z) \, 
Z_{(k, l)}^\vee \otimes Z_{(k, l)},
\end{equation*}
where $z = y - x$ and
$$
r_{(k, l)}(v; z) = \exp\Bigl(-\frac{2 \pi i d}{n} k z\Bigr)
\frac{\displaystyle \theta'\Bigl(\frac{1 + \tau}{2}\Bigr) \theta\Bigl(z + v + \frac{1 + \tau}{2}- \frac{d}{n}(k\tau - l)\Bigr)}{\displaystyle 
\theta\Bigl(v + \frac{1 + \tau}{2}- \frac{d}{n}(k\tau - l)\Bigr) \theta\Bigl(z + \frac{1 + \tau}{2}\Bigr)}.
$$
Relation (\ref{E:RelBetwTheta}) implies that
$$
\frac{\displaystyle \theta'\Bigl(\frac{1 + \tau}{2}\Bigr) \theta\Bigl(z + v + \frac{1 + \tau}{2}- \frac{d}{n}(k\tau - l)\Bigr)}{\displaystyle 
\theta\Bigl(v + \frac{1 + \tau}{2}- \frac{d}{n}(k\tau - l)\Bigr) \theta\Bigl(z + \frac{1 + \tau}{2}\Bigr)} =  
\frac{\displaystyle \theta'\Bigl(\frac{1 + \tau}{2}\Bigr) \bar{\theta}\Bigl(z+ v - \frac{d}{n}(k\tau - l)\Bigr)}{
\displaystyle i \exp\Bigl(-\pi i \frac{\tau}{4}\Bigr)\bar\theta\Bigl(v -\frac{d}{n}(k\tau - l)\Bigr) \bar\theta(z)}
$$
Moreover, it  follows from  (\ref{E:RelBetwTheta}) that $\theta'\Bigl(\dfrac{1 + \tau}{2}\Bigr) = 
i \exp\Bigl(-\pi i \dfrac{\tau}{4}\Bigr) \bar{\theta}'(0)$. Hence, we get:

\begin{multline*}
r_{(k, l)}(v; z) = \exp\Bigl(-\frac{2 \pi i d}{n} k z\Bigr)
\frac{\displaystyle \bar{\theta}'(0) \bar{\theta}\Bigl(z+ v - \frac{d}{n}(k\tau - l)\Bigr)}{
\displaystyle \bar\theta\Bigl(v -\frac{d}{n}(k\tau - l)\Bigr) \bar\theta(z)} \\
 = 
\exp\Bigl(-\frac{2 \pi i d}{n} k z\Bigr) \sigma\Bigl(v -\frac{d}{n}(k\tau - l), z\Bigr).
\end{multline*}
Here we use the fact  that 
the Kronecker elliptic function $\sigma(u, z)$ defined by  (\ref{E:KroneckerEllFunct}) satisfies the  formula: $
\sigma(u,  z)
 = \dfrac{\displaystyle \bar\theta'(0) \bar\theta_1(u+z)}{\displaystyle \bar\theta(u) \bar\theta(z)}
$
(see for instance \cite[Section 3]{Zagier}).
We have a bijection $\left\{1, \dots, n\right\} \lar \left\{0, \dots, n-1\right\}, k \mapsto (n-k)$.
Using this substitution as well as  the  identity
$ 
 \sigma(u - d\tau,  x) = \exp(2\pi i d z) \sigma(u, z),
$
we end up with the expression (\ref{E:SolutionElliptic}), as asserted.
\end{proof}

\begin{remark} Let $r(u; x_1, x_2)$ be a non-degenerate skew-symmetric solution of AYBE (\ref{E:AYBE}) satisfying (\ref{E:AnsatzAYBE}).
Let $\lieg = \mathfrak{sl}_n(\CC)$ and  $\lieA \stackrel{\pi}\lar \lieg, Z \mapsto Z - \frac{1}{n}\tr(Z) I_n$. Then
$$
\bar{r}(x_1, x_2) = (\pi \otimes \pi)(r_1(x_1, x_2))
$$
is a solution of the classical Yang--Baxter equation
\begin{equation*}
\left\{
\begin{array}{l}
\bigl[\bar{r}^{12}(x_1, x_2), \bar{r}^{13}(x_1, x_3)\bigr] + \bigl[\bar{r}^{13}(x_1, x_3), \bar{r}^{23}(x_2, x_3)\bigr] +
\bigl[\bar{r}^{12}(x_1, x_2),
\bar{r}^{23}(x_2, x_3)\bigr] = 0
\\
\bar{r}^{12}(x_1, x_2) = 
-\bar{r}^{21}(x_2, x_1),
\end{array} 
\right.
\end{equation*}
see \cite[Lemma 1.2]{Polishchuk1}.
Under certain additional assumptions (which are fulfilled provided $\bar{r}(x_1, x_2)$ is elliptic or trigonometric), the function $R(x_1, x_2) =  r(u_\circ; x_1, x_2)$
(where $u = u_\circ$ from the domain of definition of $r$ is fixed)  satisfies  the quantum Yang--Baxter equation 
\begin{equation*}\label{E:QYBE}
R(x_1, x_2)^{12} R(x_1, x_3)^{13} R(x_2, x_3)^{23} =
R(x_2, x_3)^{23} R(x_1, x_3)^{13} R(x_1, x_2)^{12},
\end{equation*}
see \cite[Theorem 1.5]{Polishchuk2}. In fact, the  expression (\ref{E:SolutionElliptic}) is a well-known elliptic solution of Belavin of the quantum Yang--Baxter equation; see \cite{Belavin}.
\end{remark}

\end{document}